\documentclass[a4paper,12pt]{article}

\RequirePackage{amsthm,amsmath,amsfonts,amssymb}
\RequirePackage[numbers]{natbib}
\RequirePackage{graphicx}

\usepackage{color,ulem,tikz}

\theoremstyle{plain}
\newtheorem{theorem}{Theorem}[section]
\newtheorem{proposition}[theorem]{Proposition}
\newtheorem{lemma}[theorem]{Lemma}
\newtheorem{corollary}[theorem]{Corollary}

\theoremstyle{remark}
\newtheorem{example}{Example}
\newtheorem{definition}[theorem]{Definition}
\newtheorem{remark}[theorem]{Remark}

\begin{document}

\begin{center}

  \Large
  {\bf
    Asymptotic bias reduction of maximum likelihood
    estimates via penalized likelihoods with differential
    geometry}

  \normalsize

  \bigskip By \bigskip

  \textsc{Masayo Y. Hirose}
  \footnote{Institute of Mathematics for Industry,
  Kyushu University, Fukuoka 819-0395, Japan;
  E-mail: masayo@imi.kyushu-u.ac.jp}
  and
  \textsc{Shuhei Mano}
  \footnote{The Institute of Statistical Mathematics,
  Tokyo 190-8562, Japan; E-mail: smano@ism.ac.jp}


\end{center}

\bigskip

\small

{\bf Abstract.}
A method for asymptotic bias reduction of maximum likelihood
estimates of generic estimands is developed. The estimator
is realized as a plug-in estimator, where the parameter
maximizes the penalized likelihood with a penalty function
that satisfies a quasi-linear partial differential equation
of the first order. The integration of the partial differential
equation with the aid of differential geometry is discussed.
Applications to generalized linear models, linear
mixed-effects models, and a location-scale family are presented.

\smallskip

{\it Key Words.}
Bias reduction, information geometry, Jeffreys prior,
partial differential equation, plug-in estimator, shrinkage

\smallskip

2020 {\it Mathematics Subject Classification Numbers.}
Primary 62F12; Secondary 62B11, 62H12

\normalsize

\section{Introduction}
\label{sect:intr}

For a sample space $\mathcal{X}$, consider a parametric model
$M$, or a family of probability densities $p(\cdot;\xi)$ with
$d$-dimensional parameter $\xi=\{\xi^1,\ldots,\xi^d\}\in\Xi$,
where the parameter space $\Xi$ is an open subset of $\mathbb{R}^d$,
$d\ge 1$. The indices follow those of tensors otherwise stated;
the upper index should not be confused with power. 
We assume that $p(x;\xi)$ is a $C^\infty$-function, that is,
an infinitely-differentiable function of $\xi$ for each
$x\in\mathcal{X}$.

For a given real-valued function $f$ of $\xi$, where
$f:\Xi\to\mathbb{R}$, we call the parameterization, that is,
the result of the mapping $f(\xi)$, an {\it estimand},
and $f$ the {\it estimand function}.
The function $f$ may depend on quantities other
  than $\xi$, but such quantities are constants in estimating
  $f(\xi)$ and not shown explicitly.
In this paper, we discuss the estimation of an estimand
for a given estimand function.
An estimator $\delta(x)$, $x\in\mathcal{X}$ of an estimand
$f(\xi)$ is {\it unbiased} if
$\mathbb{E}_\xi\delta(x)=f(\xi),$ $\forall \xi\in\Xi$,
where the expectation $\mathbb{E}_\xi$ is with respect to
$p(\cdot;\xi)$. If an unbiased estimator of $f(\xi)$ exists,
the estimand $f(\xi)$ is said to be {\it $U$-estimable.}
The unbiased estimator $\delta$ of $f(\xi)$ is
the {\it uniform minimum variance unbiased estimator}
(UMVUE) of $f(\xi)$ if
$\text{var}_\xi\delta(x)\le\text{var}_\xi\delta'(x),$
$\forall \xi\in\Xi$, where $\delta'$ is any other unbiased
estimator of $f(\xi)$.

This paper presents a method for asymptotic bias reduction
of maximum likelihood estimates of generic estimands.
The resulting estimator asymptotically coincides with the UMVUE
if a complete sufficient statistic exists.

In a regular model with parameter $\xi$, the asymptotic bias
of the maximum likelihood estimate $\hat{\xi}_{\rm MLE}$ can
be expressed as
\begin{equation}\label{bias_MLE}
  b(\xi):=\mathbb{E}_\xi\hat{\xi}_{\rm MLE}-\xi
  =\frac{b_1(\xi)}{n}+\frac{b_2(\xi)}{n^2}+\cdots,
\end{equation}
where $n$ is usually interpreted as the number of observations,
or some other measure of the rate at which information accrues.
The maximum likelihood estimate is derived as the solution to
the system of score equations:
\[
  u_i(\xi;x):=\partial_i l(\xi;x)=0, \quad
  \partial_i:=\frac{\partial}{\partial\xi^i}, \quad
  i\in\{1,\ldots,d\},
\]
where
$l(\xi;x)=l(\xi;x_1,\ldots,x_n):=\sum_{i=1}^n\log p(x_i;\xi)$
is the log-likelihood of a sample
$(x_1,\ldots,x_n)\in\mathcal{X}^n$
and $u_i(\xi;x)$ are the log-likelihood and score functions,
respectively. Firth \cite{Fir93} proposed a method to remove
the $O(n^{-1})$ term from \eqref{bias_MLE}. His bias-reduced
estimator is the solution to the modified score equations:
\begin{equation}\label{firth_score}
  u^*_i(\xi;x):=u_i(\xi;x)-\sum_{j=1}^d
  \kappa_{i,j}(\xi)b_1^j(\xi)=0, \quad i\in\{1,\ldots,d\},
\end{equation}
where $\kappa_{i,j}(\xi)=n^{-1}\mathbb{E}_\xi[u_iu_j]$.
 
His proposal was inspired by a geometrical interpretation of
the modified score equations \eqref{firth_score} in
one-dimensional exponential families with canonical
parameterization. For the log-likelihood
$l(\xi;t)=t\xi-\psi(\xi)$ with the canonical parameter $\xi$
and sufficient statistic $t$, the score function is given by
$u(\xi;t)=t-\psi'(\xi)$. The bias $b(\xi)$ of
$\hat{\xi}_{\rm MLE}$ arises from the combination of
the unbiasedness of the score function
$\mathbb{E}_\xi u(\xi;t)=0$ and the curvature of the score
function, $u''(\xi;t)\neq 0$. Because
\[
  0=u(\hat{\xi}_{\rm MLE};t)
  =u(\xi;t)+u'(\xi;t)(\hat{\xi}_{\rm MLE}-\xi)
  +\frac{1}{2}u''(\xi;t)(\hat{\xi}_{\rm MLE}-\xi)^2+\cdots,
\]
if $u(\xi;t)$ is linear in $\xi$, then there is no bias.
Otherwise, $\hat{\xi}_{\rm MLE}$ subjects to bias.  Firth's
idea is to shift the score function downward at each point
$\xi$ by an amount $i(\xi)b(\xi)$, where
$-i(\xi)=u'(\xi;t)=-\psi''(\xi)$ is the gradient of the score
function at $\xi$ (see Figure 1 of \cite{Fir93}); this
defines a modified score function
$u^*(\xi;t):=u(\xi;t)-i(\xi)b(\xi)$. Hence, a modified
estimate $\hat{\xi}$ is given as a solution to $u^*(\xi;t)=0$.
Here, $-i(\xi)b(\xi)$ corresponds to the second term on
the right-hand side of \eqref{firth_score} in one-dimensional
exponential families.

Firth \cite{Fir93} argued that in multidimensional exponential
families with canonical parameterization, his method may be
regarded as a penalized maximum likelihood estimation with
penalized likelihood
$l^*(\xi;t):=l(\xi;t)+\tilde{l}(\xi)$, where the non-random
penalty function $\tilde{l}(\xi)$ satisfies the following
system of partial differential equations
\begin{equation}\label{firth_pen}
  \partial_i\tilde{l}(\xi)
  =-\sum_{j=1}^d\kappa_{i,j}(\xi)b_1^j(\xi) \quad
  \text{for~all} \quad i\in\{1,\ldots,d\}.
\end{equation}
He obtained $\tilde{l}(\xi)=\log\sqrt{{\rm det}i(\xi)}$ as
a solution of \eqref{firth_pen}, where $i(\xi)$ is
the information matrix. Firth pointed out that the penalty
function coincides with the logarithm of the Jeffreys prior.

In practice, however, there often are cases in which we are
interested in a function of the parameter rather than
the parameter itself. A standard logistic regression has
problems of stability when the event frequency is low,
and Firth's bias-reduction method has been a tool to stabilize
the logistic regression \citep{Kos14}. Firth's method is
for estimates of regression coefficients. However, it is
natural to ask what we should do when we are interested
in the event frequency. Unfortunately, removing the bias
in estimates of regression coefficients introduces bias in
the plug-in estimator of the event frequency. Elgmati et al.
\cite{Elg15}
tackled this problem and proposed an approach intermediate
between Firth and no penalty as a pragmatic compromise
between stability and bias.

Another example in small-area estimation is to estimate
the magnitude of shrinkage of a direct estimator of the mean
of each area towards the synthetic estimator like
a ground mean, where a direct estimator is constructed based
on a sample only within each area. A measure of the magnitude
is the proportion of the error variance to the marginal
variance of the direct estimator. Based on a penalized
likelihood approach, Hirose and Lahiri \cite{HL18}
successfully obtained a plug-in estimator of the measure
when the variance is known. The bias of their estimator is
asymptotically $o(n^{-1})$. Hirose and Lahiri \cite{HL21}
discussed a case of variance of the error variance is not
known, but provided no solution.

In this paper, we address the following two questions.
1) How to obtain an asymptotically unbiased estimator for
generic estimands?
2) In Firth's bias-reduction method for multidimensional
models, the system of partial differential equations
\eqref{firth_pen} appears. However, a system of partial
differential equations is not always integrable. Under what
situations is the system integrable? To the best of
the authors' knowledge, the system \eqref{firth_pen} appeared
in the literature many years ago, such as \cite{Har64}, but
the integrability has
rarely been cared for. An exception is the result obtained by
Kosmidis and Firth
\cite{KF09} for estimation of parameters in generalized
linear models. Note that the multidimensional setting is
essential; if a model is one-dimensional, we may apply
Firth's method to the model reparameterized by $f(\xi)$.
The system \eqref{firth_pen} reduces to an ordinary
differential equation, and the integrability issue disappears.

We consider a generic estimand function $f$ represented as
a scalar function of a multidimensional parameter $\xi\in\Xi$.
We employ the \textit{plug-in estimator}
$f(\hat{\xi}(x))=f\circ\hat{\xi}(x)$, where $\hat{\xi}(x)$,
$x\in\mathcal{X}$ maximizes the suitable penalized likelihood.
Kosmidis \cite{Kos14} classified methods of bias reduction as
explicit methods and implicit methods. The proposed method is
an implicit method: we evaluate the bias of the estimator
$f(\hat{\xi})$ at the solution $\hat{\xi}$ of an equation.
Explicit methods, such as the jackknife, rely on a one-step
procedure, where the bias of the estimator
$f(\hat{\xi}_{\rm MLE})$ is evaluated at $\hat{\xi}_{\rm MLE}$
and subtracted from $f(\hat{\xi}_{\rm MLE})$ to give the new
estimate. Our method has practical advantages to explicit
methods: the estimator may exist even if $\hat{\xi}_{\rm MLE}$
does not exist, and the range is in that of $f$ as long as
$\hat{\xi}\in\Xi$. Explicit methods may suffer from giving
an unrealistic estimate because it could be out of the range
of $f$ by the subtraction. See \cite{HL18} for a discussion
on positive $f$.

The remainder of this paper is organized as follows.
Section~\ref{sect:bias} presents our method for asymptotic
bias reduction of maximum likelihood estimates of generic
models and estimands. The condition for {\it second-order
asymptotic unbiasedness,} i.e.,
$\mathbb{E}_\xi[f(\hat{\xi})]-f(\xi)=o(n^{-1})$ for large $n$,
requires that the penalty function satisfies a quasi-linear
partial differential equation of the first order.
We discuss the integration of the partial differential
equation for generic models and estimands.
For estimates of parameters, we show that our method
reproduces Firth's bias-reduction method.
In Section~\ref{sect:flat}, we discuss cases of flat model
manifolds.
We reproduce Kosmidis and Firth's \cite{KF09} result on the integrability of
the system of partial differential equations that appear
when we apply Firth's method to generalized linear models.
Several applications of our bias-reduction method to
problems involving various estimands,
including the problems in logistic regression and small
area estimation discussed above,
are presented in Section~\ref{sect:exam}.
Section~\ref{sect:disc} provides a discussion.

\section{Geometry of asymptotic bias reduction}
\label{sect:bias}

This section presents the main results of this study.
After preparing geometric concepts in
Section~\ref{subs:bias1}, we give our method for
asymptotic bias reduction in Section~\ref{subs:bias2}.
The method reduces to integration of a quasi-linear
partial differential equation for a suitable penalty
function,
as explained in Section~\ref{subs:bias3}.
Section~\ref{subs:bias4} presents a result in terms of
the geodesic distances.

\subsection{Differential geometric preliminaries}
\label{subs:bias1}

We can simplify multivariate statistical calculations with
tensors in differential geometry \citep{McC87}.
In addition to tensors, some concepts in differential
geometry help to display the following results in concise forms.
In this subsection, we
introduce the minimum of them in order to state
our main results.
For general background information on differential geometry,
see \cite{KN63}. A concise summary of the terminology in affine
differential geometry is Chapter I of \cite{NS94}.
Differential geometry of statistical model manifolds, known
as information geometry, is discussed in a book by
Amari and Nagaoka \cite{AN00} and a concise survey by Lauritzen
\cite{Lau87}.

For a parametric model $M=\{p(\cdot;\xi):\xi\in\Xi\}$,
mapping $p\mapsto\{\xi^1,\ldots,\xi^d\}$ for each point (i.e.,
a probability density) $p\in M$ can be regarded as a local
coordinate of $M$. If we consider parameterizations that are
$C^\infty$-diffeomorphic to each other as equivalent, then $M$
may be considered as a $C^\infty$-differentiable manifold.
In this sense, a parameterization of $M$ is a local coordinate
system of $M$.

For each point $\xi\in \Xi$, we define a Riemannian
metric called the Fisher metric, which is a tensor with components
\begin{equation}
  g_{ij}(\xi):=-\mathbb{E}_\xi[\partial_i\partial_jl(\xi;x)],
  \quad
  \partial_i:=\frac{\partial}{\partial\xi^i}, \quad
  i,j\in\{1,\ldots,d\},
  \label{metric}
\end{equation}
in a local coordinate system, where
$l(\xi;x):=\sum_{i=1}^n\log p(x_i;\xi)$ is the log-likelihood of
a sample $(x_1,\ldots,x_n)\in\mathcal{X}^n$ and the expectation
$\mathbb{E}_\xi$ is with respect to the probability density
$p(\cdot;\xi)$.
The differential operator of a variable with an
  upper index has a lower index.
We assume the Fisher metric tensor is an invertible matrix
everywhere. The inverse matrix of $g_{ij}$ is denoted by $g^{ij}$,
where $g_{ij}$ converts an upper index into a lower index,
while $g^{ij}$ converts a lower index into an upper index.
The determinant of the matrix $g_{ij}$ is denoted by $g$.
The Fisher metric tensor is the null cumulant of derivatives of
the log-likelihood denoted by $\kappa_{i,j}$ in Chapter 7 of
\cite{McC87}.

A tangent vector (or simply, a vector) is represented as
$X=\sum_ix^i\partial_i$, where $x^i$ are components with respect to
the local coordinate system $\{\xi^1,\ldots,\xi^d\}$. The set
of tangent vectors at $p\in M$, denoted by $T_pM$, is called
the {\it tangent space} of $M$ at $p$. The metric tensor
defines the inner product of vector fields $X=\sum_ix^i\partial_i$
and $Y=\sum_iy^i\partial_i$, as in
$\langle X,Y\rangle=\sum_{i,j}\langle\partial_i,\partial_j\rangle x^iy^j
:=\sum_{i,j}g_{ij}x^iy^j$.

An \textit{affine connection} on a manifold $M$ is a rule
of covariant differentiation on $M$, denoted by $\nabla$
(see Chapter III of \cite{KN63} or Section I.3 of \cite{NS94}
for affine connections). We write
\begin{equation*}\label{coder}
  \nabla_i\partial_j=\sum_k\Gamma^k_{ij}\partial_k,
\end{equation*}
where the system of functions $\Gamma^k_{ij}$ are
the \textit{Christoffel symbols} for the affine connection relative
to the local coordinate system (see Proposition III.7.4 of
\cite{KN63}), and $\nabla$ also refers to an affine connection.
In this paper, we specifically consider a one-parameter family of
affine connections called the $\alpha$-\textit{connection} \citep{AN00}
denoted by $\Gamma^{(\alpha)}_{ij,k}$, where
\begin{equation}\label{al_con}
  \Gamma^{(\alpha)}_{ij,k}
  :=\mathbb{E}_\xi[(\partial_i\partial_j l)u_k]+
  \frac{1-\alpha}{2}S_{ijk}, \quad \alpha\in\mathbb{R}
\end{equation}
for $i,j,k\in\{1,\ldots,d\}$. Here, $u_k(\xi;x):=\partial_kl(\xi;x)$
are the score functions, and
${\Gamma^{(\alpha)}}_{ij,k}=\sum_l g_{kr}{\Gamma^{(\alpha)}}^r_{ij}$
are the symbols constructed by converting the upper index of
the Christoffel symbols to a lower index.
The covariant differentiation with respect to
the $\alpha$-connection is specifically denoted by
$\nabla^{(\alpha)}$. Here, the symmetric tensor of order
three, $S_{ijk}:=\mathbb{E}_\xi[u_iu_ju_k]$ is called
the {\it skewness tensor}. The skewness tensor is a null
cumulant denoted by $\kappa_{i,j,k}$ in \cite{McC87}.
The vector obtained by the contraction
$S_i=\sum_{j,k}g^{jk}S_{ijk}$ appears. 

We discuss Riemannian manifolds identified with a parametric
models equipped with the Fisher metric tensors \eqref{metric}
and $\alpha$-connections \eqref{al_con}. We call such manifolds
{\it model manifolds.}

A useful relationship among $\alpha$-connections is
\begin{equation}\label{al_con2}
  \Gamma^{(\alpha_1)}_{ij,k}-\Gamma^{(\alpha_2)}_{ij,k}
  =\frac{\alpha_2-\alpha_1}{2}S_{ijk}, \quad
  {\rm for~all} \quad
  \alpha_1,\alpha_2\in\mathbb{R}.
\end{equation}
The covariant derivative of the metric tensor is as follows.
The proof is in Appendix~A.1.

\begin{proposition}
  The covariant derivative of the metric tensor with
  $\alpha$-connection is
  \begin{equation}\label{cov_met}
    \nabla^{(\alpha)}_i g_{jk}
    =\partial_i g_{jk}-\Gamma^{(\alpha)}_{ij,k}
    -\Gamma^{(\alpha)}_{ik,j}=\alpha S_{ijk}.
  \end{equation}  
\end{proposition}
\color{black}
The Christoffel symbols of the 0-connection (also called
the Levi-Civita connection or the Riemannian connection)
are represented by the derivatives of the metric tensors.
Since the middle expression of \eqref{cov_met} should be
zero if $\alpha=0$, we have
\begin{equation}\label{0_con}
  \Gamma^{(0)}_{ij,k}=\frac{1}{2}
  (\partial_i g_{jk}+\partial_j g_{ki}-\partial_k g_{ij}).
\end{equation}

In this paper, we define the operator $\Delta^{(\alpha)}$ acting
on a scalar field given by a function $f(\xi)$, $\xi\in\Xi$:
\begin{equation}\label{al_Lap}
  \Delta^{(\alpha)}f:=\sum_i\nabla^{(\alpha)i}\nabla^{(\alpha)}_if
  =\sum_{i,j}g^{ij}\partial_i\partial_j f
   -\sum_{i,j,k,r}g^{ij}g^{kr}\Gamma_{kr,i}^{(\alpha)}\partial_jf
\end{equation}
and call it the $\alpha$-\textit{Laplacian}. The 0-Laplacian
reduces to the conventional Laplacian, called
the Laplace--Beltrami operator, that satisfies
\begin{equation}\label{0_Lap}
  \Delta^{(0)}f=\frac{1}{\sqrt{g}}\sum_i\partial_i
  (\sqrt{g}\sum_jg^{ij}\partial_j f).
\end{equation}
If a function $f$ satisfies $\Delta^{(\alpha)}f=0$ for some
$\alpha$, we say that $f$ is {\it $\alpha$-harmonic}.
The authors did not find the notion of $\alpha$-harmonicity in
the literature, however, the $\alpha$-Laplacian is an example of
an extension of the Laplace--Beltrami operator proposed by
Eguchi and Yanagimoto \cite{EY08}. From
the relationship among $\alpha$-connections \eqref{al_con2}, we have
a useful relationship among $\alpha$-Laplacians:
\begin{equation}\label{al_Lap_rel}
  (\Delta^{(\alpha_1)}-\Delta^{(\alpha_2)})f=
  \sum_{i,j,k,r}g^{ij}g^{kr}
  (\Gamma^{(\alpha_2)}_{kr,i}-\Gamma^{(\alpha_1)}_{kr,i})
  \partial_jf
  =\frac{\alpha_1-\alpha_2}{2}\sum_jS^j\partial_jf
\end{equation}
for all $\alpha_1,\alpha_2 \in \mathbb{R}$.

\subsection{The condition for asymptotic unbiasedness}
\label{subs:bias2}

The first assertion of the following lemma comes from the bias
of the maximum likelihood estimator of a parameter.
The result seems classic and can be found in the literature
(e.g., Equation 20 of \cite{CS68} and Section 7.3 of \cite{McC87}),
but the authors did not find a rigorous form. Therefore,
we present the assertion with regularity conditions.
The proof is in Appendix~A.2.

\begin{lemma}\label{lemm:exps}
  Under the regularity conditions B1-B6 in Appendix~A.2,
  for a penalty function $\tilde{l}(\xi)=O(1)\in C^4$ of
  parameters $\xi$ for large $n$, where $C^4$ is
  the set of four-times differentiable functions, we have
\begin{itemize}
  
\item[$(i)$]
  $\displaystyle\mathbb{E}_\xi[(\hat{\xi}-\xi)^i]=\sum_jg^{ij}(\partial_j \tilde{l}{-\frac{1}{2}\sum_{k,r}g^{kr}\Gamma^{(-1)}_{kr,j}})+o(n^{-1})$, and
\item[$(ii)$]
  $\displaystyle\mathbb{E}_\xi[(\hat{\xi}-\xi)^i(\hat{\xi}-\xi)^j]=g^{ij}+o(n^{-1})$,

\end{itemize}
for large $n$, where $\hat{\xi}$ maximizes the penalized likelihood
$l^*(\xi;x)=l(\xi;x)+\tilde{l}(\xi)$ in $\xi$, where
  $l(\xi;x)$ is the log-likelihood.

\end{lemma}

\begin{remark}\label{rema:fisher}
  Adding the penalty function denormalizes the probability
  density, resulting in the density $p(\cdot;\xi)e^{\tilde{l}(\xi)/n}$.
  The maximization process is on the denormalized model
  (e.g., Fisher's scoring in \eqref{scoring}), but the expectations
  are with respect to the original model. Therefore, the Fisher metric
  tensors and $\alpha$-connections appear below are of the original
  model manifolds.
\end{remark}

Lemma~\ref{lemm:exps} (i) reveals that the $O(n^{-1})$ term
of the bias of the $i$-th component of the maximum likelihood
estimator of the parameter $\hat{\xi}_{\rm MLE}$ is
$-\sum_jg^{ij}(\sum_{k,r}g^{kr}\Gamma^{(-1)}_{kr,j})/2$.
\color{black}
Firth's \cite{Fir93} condition \eqref{firth_pen} is expressed as
\begin{equation}\label{firth_pen_geo}
  \partial_i\tilde{l}-\frac{1}{2}\sum_{k,r}g^{kr}\Gamma^{(-1)}_{kr,i}=0
  \quad \text{for~all} \quad i\in\{1,\ldots,d\}.
\end{equation}
The penalty function $\tilde{l}$ satisfying \eqref{firth_pen_geo}
removes the biases of all components of $\hat{\xi}_{\rm MLE}$
simultaneously.

Let us discuss bias reduction of generic estimands.
Suppose an estimand function $f:\Xi\to\mathbb{R}$ is given.
Under the regularity conditions, the bias of the plug-in
estimator $f(\hat{\xi})$ of an estimand $f(\xi)$ is given as 
\begin{align*}
  \mathbb{E}_\xi[f(\hat{\xi})-f(\xi)]=&
  \sum_i\mathbb{E}_\xi[(\hat{\xi}-\xi)^i]\partial_i f+
  \sum_{i,j}\mathbb{E}_\xi[(\hat{\xi}-\xi)^i(\hat{\xi}-\xi)^j]
  \frac{1}{2}\partial_i\partial_jf\\
  &+\sum_{i,j,k}\mathbb{E}_\xi[(\hat{\xi}-\xi)^i(\hat{\xi}-\xi)^j(\hat{\xi}-\xi)^k]
  \frac{1}{3!}\partial_i\partial_j\partial_kf+o(n^{-4/3}).
\end{align*}
Lemma~\ref{lemm:exps} (i) implies that if we
expand the penalty function $\tilde{l}(\xi)=O(1)$ for large
$n$ in fractional powers of $n$ and choose it to cancel out
the bias of $f(\hat{\xi})$ for each order in $n$, then we can
remove the bias up to the desired order. We demonstrate this
scheme up to $O(n^{-1})$ in the following lemma. The proof is
in Appendix~A.1.
The geometric meaning of the gradient of function $f$ denoted by
${\rm grad} f=\sum_{i,j}g^{ij}\partial_j f\partial_i$ will
appear in Section~\ref{subs:bias3}.

\begin{lemma}\label{lemm:cond}
  Consider a parametric model $M=\{p(\cdot;\xi):\xi\in\Xi\}$
  and an estimand function $f\in C^3$. Under the regularity conditions
  of Lemma~\ref{lemm:exps}, for a $U$-estimable estimand $f(\xi)$,
  $\xi\in\Xi$, the bias of the plug-in estimator $f(\hat{\xi})$
  of $f(\xi)$, where $\hat{\xi}$ maximizes the penalized
  log-likelihood, is $o(n^{-1})$ if and only if $\tilde{l}$ satisfies
  \begin{equation}\label{cond}
    \langle{\rm grad}f,{\rm grad}\tilde{l}\rangle
    +\frac{1}{2}\Delta^{(-1)}f=o(n^{-1}).
  \end{equation}  
\end{lemma}

\begin{remark}\label{rema:mse}
  An estimator with an asymptotic bias of $o(n^{-1})$ is said
  to be second-order unbiased. Lemma~\ref{lemm:exps} (ii)
  implies that any choice of the penalty function $\tilde{l}$ makes
  the plug-in estimator second-order efficient
  (saturates the Cram\'{e}r--Rao bound with error $o(n^{-1})$).
  Lemma~\ref{lemm:cond} specifies second-order unbiased estimator
  within the class of the second-order efficient estimators.
  A stronger result is Theorem~\ref{theo:UMVUE2}.
\end{remark}

If we have a complete sufficient statistic, an optimality
result of the plug-in estimator in terms of the asymptotic
bias follows. The main results of this paper are stated in
the following theorem. The proof is in
Appendix~A.1.

\begin{theorem}\label{theo:UMVUE2}
  Let a sample be distributed according to a parametric model
  $M=\{p(\cdot;\xi):\xi\in\Xi\}$, and suppose there exists
  a complete sufficient statistic for $M$. Then, under
  the assumptions of Lemma~\ref{lemm:cond}, the plug-in
  estimator $f(\hat{\xi})$ of an estimand $f(\xi)$,
  $\xi\in\Xi$, where $\hat{\xi}$ maximizes the penalized
  likelihood with $\tilde{l}$ satisfying
  the condition \eqref{cond}, coincides with the UMVUE of
  $f(\xi)$ up to $O(n^{-1})$.
\end{theorem}

For a given estimand function $f$, the condition \eqref{cond}
is fulfilled if we can find a penalty function $\tilde{l}$
that satisfies the following first-order quasi-linear partial
differential equation for $\tilde{l}$:
\begin{equation}\label{pde}
  \langle {\rm grad}f,{\rm grad}\tilde{l}\rangle
  +\frac{1}{2}\Delta^{(-1)}f=0.
\end{equation}
Hence, our task now is integrating \eqref{pde} for $\tilde{l}$.
Here, ``quasi-linear'' means that \eqref{pde} is linear in
the partial derivatives $\partial_i\tilde{l}$ with terms that
do not depend on $\tilde{l}$. If $f$ is $(-1)$-harmonic,
$\Delta^{(-1)}f=0$, and \eqref{pde} is linear. In this case,
we achieve asymptotic unbiasedness without a penalty.

\begin{corollary}\label{coro:no_adjm}
  If an estimand function $f$ is $(-1)$-harmonic, then
  the bias of the plug-in estimator $f(\hat{\xi}_{\rm MLE})$
  of an estimand $f(\xi)$, $\xi\in\Xi$ is $o(n^{-1})$
  and coincides with the UMVUE up to $O(n^{-1})$ given that
  a complete sufficient statistic for the parametric model
  $M$ exists.
\end{corollary}


\begin{example}
  For an exponential family with canonical parameterization,
  the log-likelihood is $l(\xi;t)=\sum_i t_i\xi^i-\psi(\xi)$,
  where $t$ is the sufficient statistic. We have
  $\mathbb{E}_\xi\eta_i(\hat{\xi}_{\rm MLE})=\eta_i(\xi)$,
  $\forall\xi\in\Xi$ for the expectation parameter
  $\eta_i(\xi):=\mathbb{E}_\xi t_i=\partial_i\psi$
  (see Example 6.6.3 of \cite{LC98}). We have
  $g_{ij}=\partial_i\partial_j\psi=\partial_i\eta_j$ and
  $\Gamma^{(-1)}_{ij,k}=2\Gamma^{(0)}_{ij,k}=\partial_ig_{jk}+\partial_jg_{ik}-\partial_kg_{ij}=\partial_i\partial_j\partial_k\psi=\partial_i g_{jk}$,
  because $\mathbb{E}_\xi[(\partial_i\partial_jl)u_k]=-\partial_i\partial_j\psi\mathbb{E}_\xi[u_k]=0$ and the property \eqref{0_con}.
  Then, $\eta_{i_0}$ for an index $i_0\in\{1,\ldots,d\}$ is
  a $(-1)$-harmonic function, because
  \[
  \Delta^{(-1)}\eta_{i_0}
    =\sum_{i,j}g^{ij}\partial_i\partial_j\eta_{i_0}
    -\sum_{i,j,k,r}g^{ij}g^{kr}\partial_kg_{ri}\partial_j\eta_{i_0}
    =\sum_{i,j}g^{ij}\partial_i g_{ji_0}
    -\sum_{k,r}g^{kr}\partial_k g_{ri_0}=0
  \]
  follows by
  $\sum_{i,j}g^{ij}\partial_k g_{ri}\partial_j\eta_{i_0}=\sum_{i,j}g^{ij}\partial_kg_{ri}g_{ji_0}=\partial_k g_{ri_0}$.
  Therefore, Corollary~\ref{coro:no_adjm} concludes that
  the estimator $\eta_{i_0}(\hat{\xi}_{\rm MLE})$ of the estimand
  $\eta_{i_0}(\xi)$ coincides with the UMVUE up to $O(n^{-1})$.
  In fact, since $\eta_{i_0}(\hat{\xi}_{\rm MLE})$ is an unbiased
  estimator of $\eta_{i_0}(\xi)$ and is the complete sufficient
  statistic of the exponential family, the Lehmann-Scheff\'e theorem
  implies that $\eta_{i_0}(\hat{\xi}_{\rm MLE})$ is the UMVUE of
  $\eta_{i_0}(\xi)$ exactly for any $n\ge 1$.
\end{example}  

Before closing this subsection, we revisit the bias reduction of
$\hat{\xi}_{\rm MLE}$ discussed by Firth \cite{Fir93}.
When we remove the biases of all the components of
$\hat{\xi}_{\rm MLE}$ with a single penalty function $\tilde{l}$,
substituting \eqref{al_Lap} with $\alpha=-1$ into \eqref{pde}
yields
\begin{equation}\label{pde_firth}
  \sum_jg^{ij}(\partial_j\tilde{l}
  -\frac{1}{2}\sum_{k,r}g^{kr}\Gamma^{(-1)}_{kr,j})=0,
  \quad i\in\{1,\ldots,d\}
\end{equation}
for $f=\xi^1,\ldots,f=\xi^d$, respectively, and we obtain
\eqref{firth_pen_geo}, because  
\begin{align*}
0&=\sum_i g_{mi}\sum_jg^{ij}(\partial_j\tilde{l}
  -\frac{1}{2}\sum_{k,r}g^{kr}\Gamma^{(-1)}_{kr,j})=
  \sum_j\delta^j_m(\partial_j\tilde{l}
  -\frac{1}{2}\sum_{k,r}g^{kr}\Gamma^{(-1)}_{kr,j})\\
 &=\partial_m\tilde{l}-\frac{1}{2}\sum_{k,r}g^{kr}\Gamma^{(-1)}_{kr,m}
\end{align*}
for $m\in\{1,\ldots,d\}$.
Since \eqref{pde_firth} is the system of $d$ partial differential
equations for the single unknown function $\tilde{l}$, we have to
consider the integration of an overdetermined system of partial
differential equations, except for one-dimensional models $(d=1)$. 
An overdetermined system of partial differential equations is
not always integrable. We will discuss this issue in
Section~\ref{subs:flat2}.

\begin{remark}\label{rema:integ}
  We do not have to remove the biases of all the components
  of $\hat{\xi}_{\rm MLE}$ with a single penalty function.
  We may consider the bias reduction for each component of
  $\hat{\xi}_{\rm MLE}$ separately. In fact, if we consider
  the estimand function $f=\xi^{i_0}$ for an index
  $i_0\in\{1,\ldots,d\}$, the condition \eqref{pde} for
  the penalty function, say $\tilde{l}^{(i_0)}$, becomes
  $\sum_jg^{i_0j}(\partial_j\tilde{l}^{(i_0)}-\sum_{k,r}g^{kr}\Gamma^{(-1)}_{kr,j}/2)=0$.
  The integrability issue disappears, because this is a single
  partial differential equation for $\tilde{l}^{(i_0)}$.
  See the bias reduction of the sample variance
    in Section~\ref{subs:exam3} for an example.
\end{remark}

\subsection{Integration of the quasi-linear partial differential equation}
\label{subs:bias3}

In this subsection, we discuss the integration of the partial
differential equation \eqref{pde} for generic models and estimands.
A generic quasi-linear partial differential equation of
the first order is known to be equivalent to a system of ordinary
differential equations, and geometric interpretation makes this
equivalence apparent (see Sections I.4, I.5, II.2 of \cite{CH62}
and Chapter 17 of \cite{Lee02}).

Solving a partial differential equation is finding an integral
manifold.
An $r$-dimensional {\it distribution} $\mathcal{D}$
on a manifold $M$ is an assignment to each point $p\in M$
a subspace $\mathcal{D}_p$ of the tangent space $T_pM$.
The distribution here is a collection of subspaces, and should
not be confused with a probability distribution.
A connected submanifold $N$ of $M$ is called an {\it integral manifold} of
$\mathcal{D}$ if $T_pN=\mathcal{D}_p$ for all $p\in N$.
If an integral manifold exists through each point of $M$,
$\mathcal{D}$ is said to be {\it completely integrable.}
The maximal integral manifolds do not intersect each other and
completely cover the whole of $M$. We say that the maximal
integral manifolds of $\mathcal{D}$ form the {\it leaves} of
a {\it foliation} of $M$ (see Chapter 2 of \cite{Mor01} or
Chapter 19 of \cite{Lee02}).

For each function $f\in C^1$, the total derivative of $f$
at $p\in M$ is denoted as $Xf=\sum_ix^i\partial_if$ for $X\in T_pM$.
There exists a unique vector field denoted by ${\rm grad}f$,
called the {\it gradient} of $f$, satisfying
$\langle {\rm grad}f,X\rangle=Xf$. In a local coordinate system,
${\rm grad} f=\sum_{i,j}g^{ij}\partial_j f\partial_i$. The gradient
of $f$ is a normal vector field to each $(d-1)$-dimensional
hypersurface in the $d$-dimensional manifold $M$ on which $f$
is a constant. Such a hypersurface is called a {\it level surface}
of $f$ (see Section 4.1 of \cite{Mor01}). 

We assume that
${\rm grad}f$ in a model manifold $M$ is not degenerated everywhere:
$|{\rm grad}f|^2:=\langle{\rm grad}f,{\rm grad}f\rangle>0$.
Consider an embedding of $M$ in $M\times\mathbb{R}$ by
introducing a local coordinate system
$(\xi^1,\ldots,\xi^d,\xi^{d+1})$, where
$\xi^{d+1}=\tilde{l}(\xi^1,\ldots,\xi^d)$.
Suppose that a solution of the partial differential equation
\eqref{pde} for $\tilde{l}$ is given as an implicitization
of $\xi^{d+1}=\tilde{l}(\xi^1,\ldots,\xi^d)$ by the equation
\begin{equation}\label{level_phi}
  \phi(\xi^1,\ldots,\xi^{d+1})=\phi_0 
\end{equation}
for a constant $\phi_0$. This equation determines
a $d$-dimensional level surface of $\phi$ in
$M\times\mathbb{R}$, on which $\phi$ is the constant $\phi_0$,
and is the integral manifold determined by the partial
differential equation \eqref{pde}. In fact, the constancy gives
$\partial_i\phi+\partial_{d+1}\phi\cdot\partial_i\tilde{l}=0$
for all $i\in\{1,\ldots,d\}$, and multiplying each equation by
$({\rm grad}f)^i$ and summing them yields
\begin{align}
  &({\rm grad}f)^i\partial_i\phi+\partial_{d+1}\phi\sum_i({\rm grad}f)^i
  \partial_i\tilde{l}
  =\langle{\rm grad}f,{\rm grad}\phi\rangle+\partial_{d+1}\phi
  \langle{\rm grad}f,{\rm grad}\tilde{l}\rangle\nonumber\\
  &=\langle{\rm grad}f,{\rm grad}\phi\rangle-
  \partial_{d+1}\phi\Delta^{(-1)}f/2=
  \langle({\rm grad}f,-\Delta^{(-1)}f/2),
  {\rm grad}^*\phi\rangle=0, \label{monge}
\end{align}
where ${\rm grad}^*\phi$ is the gradient of $\phi$ 
defined in $M\times\mathbb{R}$ with components
$({\rm grad}^*\phi)^i=\sum_jg^{ij}\partial_j\phi$ for
$i\in\{1,\ldots,d\}$ and
$({\rm grad}^*\phi)^{d+1}=\partial_{d+1}\phi$.
The second-last equality follows by \eqref{pde}. Here,
${\rm grad}^*\phi$ is the normal vector field to each
integral manifold determined by \eqref{level_phi}.
The last equality of \eqref{monge} requires that at each
point tangent planes of all hypersurfaces through
the point belong to a single pencil of planes whose axis is
given by the direction field $({\rm grad}f,-\Delta^{(-1)}f/2)$
in $M\times\mathbb{R}$, called the {\it Monge axis} of
the partial differential equation \eqref{pde}.

The integral curves along this direction field are defined
by a system of ordinary differential equations and are called
the {\it characteristic curves} of the partial differential
equation \eqref{pde} (see Figure~\ref{fig:fig1}).
\begin{figure}
  \centering
  \includegraphics[height=50mm]{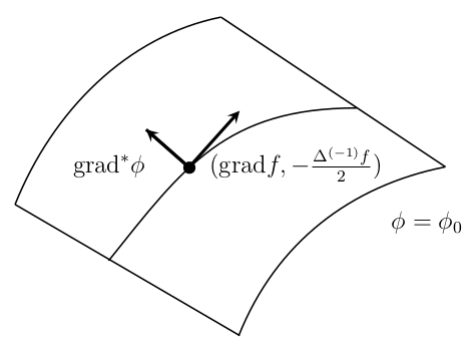}
    \caption{An integral manifold $\phi=\phi_0$, which encodes
    the penalty function $\tilde{l}$ by implicitization,
    and a characteristic curve on it. The normal vector
    field to the integral manifold, ${\rm grad}^*\phi$, and
    the Monge axis $({\rm grad}f,-\Delta^{(-1)}f/2)$ at
    a point on the characteristic curve are orthogonal.}
    \label{fig:fig1}
\end{figure}
In other words, a $d$-parameter family of characteristic curves
parameterized by $s$:
\[
  \{\xi^1(s),\ldots,\xi^d(s),
  \xi^{d+1}(s)=\tilde{l}(\xi^1(s),\ldots,\xi^{d}(s))\},
  \quad s\ge 0
\]
satisfies the system of ordinary differential equations:
\begin{equation}\label{odes}
  \frac{d\xi^i(s)}{ds}=({\rm grad}f)^i,
  \quad i\in\{1,\ldots,d\} \quad \text{and} \quad 
  \frac{d\xi^{d+1}(s)}{ds}=\frac{d\tilde{l}(s)}{ds}=
  -\frac{\Delta^{(-1)}f}{2},
\end{equation}
where the initial values $\{\xi^1(0),\ldots,\xi^{d}(0)\}$ and
$s$ are the parameters of the characteristic curves.
These parameters should not be confused with statistical
parameters. By virtue of the theory of ordinary differential
equations, the unique solution exists if the right-hand sides
of \eqref{odes} are Lipschitz continuous. Eliminating
the parameters from the expression yields an explicit
expression for the penalty function
$\tilde{l}=\tilde{l}(\xi^1,\ldots,\xi^d)$. This procedure is
discussed in Sections 2.1 and 2.2 of \cite{CH62}.
As in \eqref{monge}, we see that
\begin{equation}\label{pde_int}
  \langle{\rm grad}f,{\rm grad}\phi\rangle
  -\partial_{d+1}\phi\frac{\Delta^{(-1)}f}{2}=0.
\end{equation}
Thus, a solution of the system of ordinary differential equations
\eqref{odes} gives
\begin{equation*}
  \frac{d\phi}{ds}=\sum_i\partial_i\phi
  \frac{d\xi^i}{ds}+
  \partial_{d+1}\phi\frac{d\xi^{d+1}}{ds}=\langle{\rm grad}f,{\rm grad}\phi\rangle
  -\partial_{d+1}\frac{\Delta^{(-1)}f}{2}=0,
\end{equation*}
which means that along each characteristic curve
$\phi(\xi^1(s),\ldots,\xi^{d+1}(s))$, $s\ge 0$
is a constant value, as expected from \eqref{level_phi}.
In particular, if $f$ is $(-1)$-harmonic, the Monge axis
is always parallel to the model manifold $M$ and
$\xi^{d+1}=\tilde{l}(\xi^1,\ldots,\xi^d)$ has a constant value.
This is a geometric interpretation of why we do not need a penalty,
as seen in Corollary~\ref{coro:no_adjm}.



In principle, we can solve the partial differential equation
\eqref{pde} for generic model manifolds and estimand functions.
To obtain an explicit expression, we have to eliminate parameters
$s$ and $\{\xi^1(0),\ldots,\xi^d(0)\}$ from the solution of
the system of the ordinary differential equations
\eqref{odes}. However, this process is tedious in practice.
In fact, elimination of parameters to obtain an implicit
representation of a manifold is a topic in
computational algebraic geometry and is called
the {\it implicitization problem} (see Chapter 3 of \cite{CLO07}).
Further investigation of the implicitization
problem seems interesting, but in this study we developed
the following trick to circumvent the implicitization problem.

Suppose that we seek a solution to the partial differential
equation \eqref{pde_int} for an integral of the system of
ordinary differential equations \eqref{odes} of the form
\[
  \phi(\xi^1,\ldots,\xi^{d+1})
  ={\chi}(f(\xi^1,\ldots,\xi^d))-\xi^{d+1}
  ={\chi}(f(\xi^1,\ldots,\xi^d))-\tilde{l}(\xi^1,\ldots,\xi^d)
\]
with an injective map ${\chi}:\mathbb{R}\to\mathbb{R}$.
This form of $\phi$ has a geometric interpretation. Note that
on an integral manifold, that is, $\phi={\rm const.}$,
$f={\rm const.}$ if and only if $\tilde{l}={\rm const.}$
We have $(d-1)$-dimensional level surfaces of the estimand
function $f$ in the $d$-dimensional model manifold $M$,
namely, $f$ gives a codimension-1 foliation of $M$, and
the level surfaces of $f$ are the leaves. Let the foliation
be denoted by $\{N_w(f):w\in\mathbb{R}\}$ with
\[
  N_w(f)=\{(\xi^1,\ldots,\xi^d)\in M:f(\xi^1,\ldots,\xi^d)=w\}.
\]
On the other hand, on the $d$-dimensional integral surface
$\phi=u$ for a constant $u\in\mathbb{R}$, we have
$(d-1)$-dimensional level surfaces of $\tilde{l}$. Let
the foliation be denoted by
$\{N^*_{u,v}(\tilde{l}):u,v\in\mathbb{R}\}$ with
\[
  N^*_{u,v}(\tilde{l})=\{(\xi^1,\ldots,\xi^d,v)\in M\times\mathbb{R}:\phi(\xi^1,\ldots,\xi^d,v)=u,\tilde{l}(\xi^1,\ldots,\xi^d)=v)\}.
\]
We introduce a projection:
\[
  \pi(N^*_{u,v}(\tilde{l})):=\{(\xi^1,\ldots,\xi^d):(\xi^1,\ldots,\xi^{d},\xi^{d+1})\in N^*_{u,v}(\tilde{l})\}.
\]
Then, if we consider the integral manifold with $u=\chi(w)-v$,
we have
\[
  \{N_w(f):w\in\mathbb{R}\}
  =\{\pi(N^*_{\phi_0,\chi(w)-\phi_0}(\tilde{l})):w\in\mathbb{R}\}
  =\{\pi(N^*_{\phi_0,v}(\tilde{l})):v\in\mathbb{R}\},
\]
which means that the foliation of the integral manifold
$\phi=\phi_0$ by the level surfaces of $\tilde{l}$ projected
onto $M$ constitutes the foliation of $M$ by the level surfaces
of $f$ (Figure~\ref{fig:fig2}).
\begin{figure}
  \centering
  \includegraphics[height=55mm]{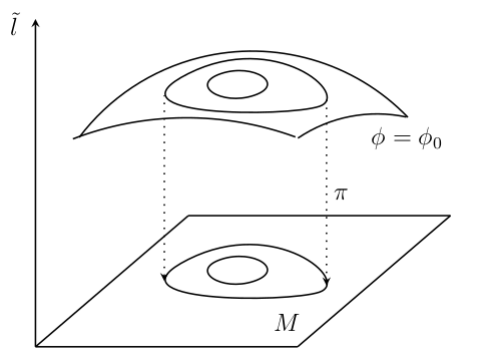}
  \caption{The foliation of the integral manifold $\phi=\phi_0$
    by the level surfaces of $\tilde{l}$ projected onto
    the model manifold $M$ constitute the foliation of $M$ by
    the level surfaces of $f$. The contours are the level surfaces.}
  \label{fig:fig2}
\end{figure}

The condition under which $\phi$ is an integral of the system
of ordinary differential equations \eqref{odes} can be
expressed as
\[
  \frac{d\phi}{ds}=\frac{d\chi}{d f}
  \sum_i\partial_i f\frac{d\xi^i}{ds}-\frac{d\xi^{d+1}}{ds}
  =\frac{d\chi}{df}|{\rm grad} f|^2+\frac{\Delta^{(-1)} f}{2}
  =0.
\]
Therefore, by integrating the differential equation
\begin{equation}\label{psi_cond}
  \frac{d\chi}{df}
  =-\frac{1}{2}\frac{\Delta^{(-1)} f}{|{\rm grad} f|^2},
\end{equation}
we obtain the penalty function
$\tilde{l}(\xi^1,\ldots,\xi^d)=\chi(f(\xi^1,\ldots,\xi^d))+{\rm const.}$
In particular, if the right-hand side of \eqref{psi_cond} is
a function of $f$, the integration is straightforward.
The constant can be chosen arbitrarily since the maximizer
$\hat{\xi}$ of the penalized likelihood is our concern and
does not depend on the constant. Therefore, the penalty
function is not unique.

\subsection{Functions of geodesic distances}
\label{subs:bias4}

We can obtain explicit results if an estimand is a function of
the squared geodesic distance. Our result is the following theorem.
The proof is in Appendix~A.3. The lemma used appeared in Riesz's
\cite{Rie49} work  on constructing the fundamental solution of
a partial differential equation as a convergent series in
the squared geodesic distance associated with the Laplace-Beltrami
operator \eqref{0_Lap}.
The definitions of the geodesic distance and the normal coordinate
system are in Appendix~A.3.

\begin{theorem}\label{theo:cond0}
  Consider a parametric model $M=\{p(\cdot;\xi):\xi\in\Xi\}$.
  Fix a point $\zeta\in M$ and consider the geodesic
  joining $\zeta$ and point $\xi\in U_\zeta$
  with the the geodesic distance $t(\xi):={\rm dis}(\zeta,\xi)$.
  Suppose we have an estimand function of the squared geodesic
  distance $f(\xi)=\varphi(t^2(\xi^1,\ldots,\xi^d))$.
  Then, under the regularity conditions of Lemma~\ref{lemm:cond},
  the bias of the plug-in estimator $f(\hat{\xi})$ of
  the estimand $f(\xi)$, where $\hat{\xi}$ maximizes
  the penalized likelihood with the penalty function
  $\tilde{l}(\xi)=\chi(t^2(\xi^1,\ldots,\xi^d))$ satisfying
  \begin{equation}\label{cond0}  
    \chi(t^2)=
    \frac{1}{4}\sum_{i=1}^d
    \int^tS_i\dot{\xi}d\tilde{t}
    -\frac{1}{2}\log\left\{|\varphi'(t^2)|t^d\sqrt{h}\right\}
  \end{equation}
  is $o(n^{-1})$ and coincides with the UMVUE
  up to $O(n^{-1})$ if a complete sufficient statistic
  for $M$ exists. Here, $h$ is the determinant of the Fisher
  metric tensor in the normal coordinate system at $\zeta$,
  $\varphi'(x)=d\varphi/dx$ for $x\in\mathbb{R}_{>0}$,
  $\dot{\xi}=d\xi/dt$, and $\tilde{t}$ is
    the variable of integration.
\end{theorem}

The implication of this theorem becomes clear in the following
example.

\begin{example}
  If the estimand is the squared geodesic distance, i.e.,
  $\varphi$ is the identity map, the right-hand side of
  \eqref{psi_cond} becomes 
  $-d/(4t^2)-d\log h/dt^2/4+\sum_iS_id\xi^i/dt^2/4$, where
  we used the relation between 0 and $(-1)$-Laplacians
  \eqref{al_Lap_rel}, expressions (A.6), (A.7), and
  Lemma~A.3 (ii) in Appendix~A.3.
  Subsequently, the solution to \eqref{psi_cond}
  coincides with that of Theorem~\ref{theo:cond0}.
  The foliation of $M$ by the level surfaces of $f$ is
  reduced to leaves consisting of equidistant points
  from the point
  $\zeta\in M$. For any point $\xi$ in $M$, there exists
  a geodesic $\gamma$ joining $\zeta$ and $\xi$, which is
  transverse to the foliation and diagonally intersects
  the leaves (see Figure~\ref{fig:fig3}).
\end{example}
\begin{figure}
  \centering
  \includegraphics[height=35mm]{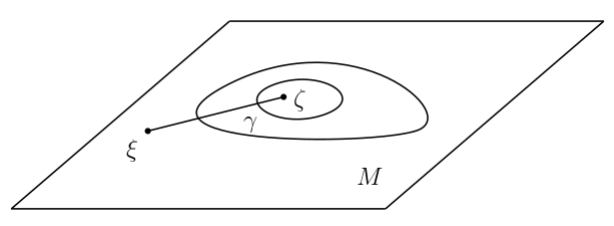}
  \caption{The foliation of a model manifold $M$ by
    the equidistant points from the point $\zeta\in M$.
    A geodesic $\gamma$ joining $\zeta$ and a point
    $\xi\in M$ is also shown.}\label{fig:fig3}
\end{figure}

\section{Integrability and flat model manifolds}
\label{sect:flat}

Let us discuss integration of the partial differential equation
\eqref{pde} for flat model manifolds. After introducing
the notion of the $\alpha$-flatness, we discuss one-dimensional
models in Section~\ref{subs:flat1}, because one-dimensional
$C^\infty$-manifold is always flat
(Proposition~A.4 in Appendix~A.4).
We discuss $\alpha$-flat model manifolds in Section~\ref{subs:flat2}.
We assume the assumptions of Lemma~\ref{lemm:cond}. 

An affine connection on a manifold $M$ is said to be \textit{flat}
if and only if a local coordinate system exists around each point
such that $\Gamma^i_{jk}=0$ for all $i$, $j$, and $k$ \citep{NS94}.
Moreover,

\begin{definition}[\cite{AN00}]
  A manifold with a flat $\alpha$-connection is said to be
  $\alpha$-\textit{flat.} A local coordinate system
  such that ${\Gamma^{(\alpha)}}^i_{jk}=0$ for all $i$, $j$, and $k$
  is called an $\alpha$-\textit{affine coordinate system.}
\end{definition}

\begin{example}\label{exam:a=1}
  An exponential family with canonical parameterization is 1-flat
  because the log-likelihood is $l(\xi;t)=\sum_it_i\xi^i-\psi(\xi)$
  and the Christoffel symbols vanish:
  $\Gamma^{(1)}_{ij,k}=-\partial_i\partial_j\psi\mathbb{E}_\xi[t_k-\partial_k\psi]=0$ for all $i$, $j$, and $k$. Here, $\xi$ comprises a $1$-affine
  coordinate system.
\end{example}  


Suppose we seek a solution $f$ for a system of linear first-order
system of partial differential equations
\begin{equation}\label{lee_pde}
  \partial_i f(\xi)=h_i(\xi), \quad i\in\{1,\ldots,d\},
\end{equation}
where $h_i$ are smooth  functions defined on an open subset
of $\mathbb{R}^d$. If $d>1$, the system is {\it overdetermined,}
indicating that there are more equations than unknown function
$f$. An overdetermined system has a solution only if the partial
differential equations satisfy certain compatibility conditions.
In fact, because $\partial_i\partial_jf=\partial_j\partial_if$
for all $i\neq j$, it is obvious that
\begin{equation}\label{lee_cond}
  \partial_j h_i(\xi)=\partial_i h_j(\xi) \quad
  \text{for~all} \quad i\neq j 
\end{equation}  
is a necessary condition for \eqref{lee_pde} to have a solution
in a neighborhood of any point with an arbitrary initial value.
By virtue of the Frobenius theorem, we can show that
\eqref{lee_cond} is sufficient (Proposition 19.17 of \cite{Lee02}).
The condition of \eqref{lee_cond} is called
the {\it integrability condition} for \eqref{lee_pde}.
\color{black}

Lemma~A.5 in Appendix~A.4 shows that if a model manifold is
$\alpha_0$-flat for non-zero $\alpha_0$, then the system of
partial differential equations $\nabla^{(\alpha)}v=0$ for
a differential form $v$ is integrable for any
$\alpha\in\mathbb{R}$. We call the density of $v$ satisfying
$\nabla^{(\alpha)}v=0$, that is,
\begin{equation}
  v(\alpha;\alpha_0):=g^{\frac{1}{2}-\frac{\alpha}{2\alpha_0}},
  \quad \alpha\in\mathbb{R}
  ~\text{and}~\alpha_0\in\mathbb{R}\setminus \{0\},
\label{al_prior}
\end{equation}
the density of the $\alpha$-{\it parallel volume element
with respect to the $\alpha_0$-flat manifold.} Note that
$v(0;\alpha_0)=\sqrt{g}$ is the Jeffreys prior irrespective
of $\alpha_0$. $\alpha$-parallel volume elements have
appeared in the statistical literature including \cite{TA05}
(Remark A.7 in Appendix A.4).

\subsection{One-dimensional model manifolds}
\label{subs:flat1}

For one-dimensional models, the partial differential equation
\eqref{pde} is reduced to an ordinary differential equation and
can be integrated as in the following corollary of
Theorem~\ref{theo:UMVUE2}. The proof is in Appendix~A.4.

\begin{corollary}\label{coro:1dim}
  Consider a one-dimensional model manifold $M$.
  The bias of the plug-in estimator $f(\hat{\xi})$ of an estimand
  $f(\xi)$, $\xi\in\Xi$ satisfying $f'(\xi)=df/d\xi\neq 0$,
  where $\hat{\xi}$ maximizes the penalized likelihood
  with the penalty function $\tilde{l}(\xi)$ satisfying
  \begin{equation}\label{1dim}
    e^{\tilde{l}(\xi)}\propto\frac{\{g(\xi)\}^{1/4}}
    {\sqrt{|f'(\xi)|}}
    e^{\frac{1}{4}\int^\xi S_1(\tilde{\xi})d\tilde{\xi}},
  \end{equation}
  is $o(n^{-1})$, and coincides with
  the UMVUE up to $O(n^{-1})$ if a complete sufficient
  statistic for $M$ exists. Here,
  $S_1=g^{11}S_{111}$, $g^{11}=g_{11}^{-1}$, and $g=g_{11}$.
  In particular, for an $\alpha$-flat manifold of non-zero
  $\alpha$ $\in\mathbb{R}$ with
  an $\alpha$-affine coordinate system $\xi$,
  the condition becomes
  \begin{equation}\label{1dim_af}
    e^{\tilde{l}(\xi)}\propto
    \frac{v((\alpha-1)/2;\alpha)}{\sqrt{|f'(\xi)|}},
  \end{equation}
  where $v((\alpha-1)/2;\alpha)$ is the density of
  the $(\alpha-1)/2$-parallel volume element with respect
  to an $\alpha$-flat manifold defined in \eqref{al_prior}.
\end{corollary}

\begin{remark}
  Any one-dimensional $C^\infty$-manifold is flat, and a curve in
  the manifold is always a geodesic. Therefore, any
  local coordinate is a normal coordinate, and
  the expression \eqref{cond0} with $d=1$ should be
  consistent with the expression \eqref{1dim}. In fact,
  using the facts that
  $g=g_{11}$ is a constant by
  the canonical parameterization of the geodesic:
  $g_{11}\{\dot{\xi}(0)\}^2=1$ and
  $f'(\xi)=d\varphi(t^2)/dt^2\cdot dt^2/d\xi=2\varphi'(t^2)t/\dot{\xi}(0)$,
  we confirm that \eqref{cond0} yields \eqref{1dim}.
  Moreover, if $\xi$ is an $\alpha$-affine coordinate
  system, \eqref{1dim} reduces to \eqref{1dim_af},
  since $g'=\alpha S_{111}$ by \eqref{cov_met} with
  $\Gamma^{(\alpha)}_{11,1}=0$.
\end{remark}    

\subsection{Flat model manifolds and generalization of Firth's method}
\label{subs:flat2}

We can obtain some explicit results for $\alpha$-flat model manifolds.
Using the relation between $\alpha$ and $(-1)$-Laplacians
\eqref{al_Lap_rel}, we recast \eqref{pde} into
\begin{equation}\label{cond_af1}
  \sum_i({\rm grad}f)^i\partial_i\tilde{l}
  =-\frac{1}{2}\Delta^{(\alpha)}f
  +\frac{1+\alpha}{4}\sum_iS_{i}({\rm grad}f)^i.
\end{equation}
Obviously, a sufficient condition for the penalty function $\tilde{l}$
satisfying \eqref{cond_af1} is the following overdetermined system of 
partial differential equations
\begin{equation}\label{cond_suf}
  \partial_i\tilde{l}=-\frac{1}{2}
  \frac{\Delta^{(\alpha)}f}{({\rm grad}f)^i}
  +\frac{1+\alpha}{4}S_i \quad \text{for~all} \quad
  i\in\{1,\ldots,d\}.
\end{equation}
except for $d=1$. The integration of this system gives the following
corollary of Theorem~\ref{theo:UMVUE2}. The proof is in Appendix A.4,
in which we showed that the integrability condition for
\eqref{cond_suf} is satisfied.

\begin{corollary}\label{coro:alp}
  Consider an $\alpha$-flat model manifold $M$ for non-zero
  $\alpha\in\mathbb{R}$ and an $\alpha$-harmonic estimand
  function $f$. The bias of the plug-in estimator $f(\hat{\xi})$
  of an estimand $f(\xi)$, $\xi\in\Xi$, where $\hat{\xi}$
  maximizes the penalized likelihood with the penalty function
  $\log v((\alpha-1)/2;\alpha)$, is $o(n^{-1})$
  and coincides with the UMVUE up to $O(n^{-1})$ if a complete
  sufficient statistic for $M$ exists. Here,
  $v((\alpha-1)/2;\alpha)$ is the density of
  the $(\alpha-1)/2$-parallel volume element with respect to
  the $\alpha$-flat model manifold defined in \eqref{al_prior},
  and $\xi$ comprises an $\alpha$-affine coordinate system.
\end{corollary}

  
Corollary~\ref{coro:alp} is for estimands of scalar functions,
nevertheless, it gives a generalization of Firth's bias-reduction
method of maximum likelihood estimates of
multidimensional parameters. For an $\alpha$-flat manifold
of non-zero $\alpha$, a component of the $\alpha$-affine
coordinate system $\xi$, say $\xi^{i_0}$, $i_0\in\{1,\ldots,d\}$,
is $\alpha$-harmonic, because
$\Delta^{(\alpha)}\xi^{i_0}=\sum_i\partial^i\partial_i\xi^{i_0}=0$.
Applying Corollary~\ref{coro:alp} with setting $f=\xi^{(i)}$,
we obtain the penalty function $\log v((\alpha-1)/2;\alpha)$
irrespective of $i_0$. Therefore, we have

\begin{corollary}\label{coro:firth}
  For an $\alpha$-flat model manifold of a non-zero $\alpha$
  $\in\mathbb{R}$, the bias of the estimator $\hat{\xi}$ of
  the $\alpha$-affine coordinate $\xi$ that maximizes
  the penalized likelihood with the penalty function
  $\log v((\alpha-1)/2;\alpha)$ is $o(n^{-1})$.
\end{corollary}

The generalized linear model of generic link function is an example
of an $\alpha$-flat model manifold.

\begin{example}
  For generalized linear models of generic link function,
  Theorem~1 of \cite{KF09} says that the integrability condition
  for Firth's method is equivalent to
  $h''_i/h'_i=\omega\kappa_{2i}'/\kappa_{2i}$ for all $i\in\{1,\ldots,d\}$
  and a constant $\omega\in\mathbb{R}$, where $h_i$ is the inverse
  link function, $\kappa_{2i}$ is the variance of the response vector,
  and the derivatives are with respect to the linear predictor.
  They specified the suitable penalty function as
  $v(\omega-1;2\omega-1)$ for $\omega\neq 1/2$. In fact, direct
  computation shows that
  $\Gamma^{(\alpha)}_{rs,t}=\{(\omega-1)+(1-\alpha)/2\}\sum_{i=1}^nh_i^{'2}\kappa_{2i}'/\kappa_{2i}^2x^i_rx^i_sx^i_t$ and it vanishes if $\alpha=2\omega-1$,
  where $x^i_r$ is the design matrix.   Here, we used
  $S_{rst}=\sum_{i=1}^nh_i^{'2}\kappa_{2i}'/\kappa_{2i}^2x^i_rx^i_sx^i_t$,
  which follows by the right equality of \eqref{cov_met} with $\alpha=1$.  
  Further discussion on the canonical link ($\omega=1$) is in
  Section~\ref{subs:exam1}.
\end{example}  

\section{Examples of estimands}
\label{sect:exam}

In this section, several examples are presented with numerical
results to illustrate how the developed bias-reduction method
for generic estimands works for problems of statistical interest.

\subsection{Generalized linear models}
\label{subs:exam1}
Generalized linear models constitute a class of exponential
families that cover both discrete and continuous measurements.
They share several common properties, and the bias reduction
of the maximum likelihood estimate of a parameter has been
actively studied (see Section 15.2 of \cite{MN89} and \cite{Kos14}
for a survey).
Bias reduction of generic estimands is even more challenging.
In this subsection, we apply our bias-reduction method to
estimate the expected responses of generalized linear models.
We will see that the `design dependent shrinkage' property
of our bias reduction works effectively.

For simplicity, we concentrate on a canonical link and a known
dispersion $\phi$. A vector of observations $y$ with $n$
components is assumed to be the realization of a random vector
whose components are independently distributed with mean vector
$\mu$. The log-likelihood of a generalized linear model is
expressed as
\begin{equation}\label{like_glm}
  l(\xi;y)=\frac{\sum_{i=1}^ny_i\xi^i-\psi(\xi)}{\phi}+c(y,\phi)
\end{equation}
for some specific functions $c$ and $\psi$. The linear predictor
$\xi^i$ is modeled by a linear combination of unknown parameters
$\{\beta^1,\ldots,\beta^d\}$:
\begin{equation}\label{design_glm}
  \xi^i=\sum_{r=1}^d{x^i}_r\beta^r, \quad i\in\{1,\ldots,n\},
\end{equation}
where ${x^i}_r$ is the design matrix. By substituting
\eqref{design_glm} into \eqref{like_glm}, it can be observed
that the model is an exponential family with canonical parameter
$\{\beta^1,\ldots,\beta^d\}$ and sufficient statistics
$t_r=\sum_{i=1}^ny_i{x^i}_r$, $r\in\{1,\ldots,d\}$. The score functions
are
\[
  u_r=\partial_r l=
  \sum_{i=1}^n
  \frac{\partial l}{\partial\xi^i}\frac{\partial\xi^i}{\partial\beta^r}
  =\frac{1}{\phi}\sum_{i=1}^n(y_i-\mu_i){x^i}_r, \quad
  \partial_r:=\frac{\partial}{\partial\beta^r}, \quad
  r\in\{1,\ldots,d\},
\]
with the means
$\mu_i=\mathbb{E}_\beta[y]=\partial{\psi}/\partial\xi^i$.
The {\it link function} of a generalized linear model relates
the linear predictors $\xi^i$ to the means $\mu_i$. We assume
that the link function is invertible, and let the inverse link be
denoted by $h$, that is, $\mu_i=h(\xi^i)$. The Fisher metric
tensor is given by
\[
  g_{rs}=-\partial_r\partial_sl
  =\frac{1}{\phi}\sum_{i=1}^n\partial_s\mu_i{x^i}_r
  =\frac{1}{\phi}\sum_{i=1}^n\frac{d\mu_i}{d\xi^i}\partial_s\xi^i{x^i}_r
  =\frac{1}{\phi}\sum_{i=1}^nh'(\xi^i){x^i}_r{x^i}_s.
\] 
It is convenient to use the following representation in
matrices:
\begin{equation}\label{metric_glm}
  (g_{rs})=\frac{1}{\phi}\sum_{i=1}^n({x_r}^i)(w_{ii})({x^i}_s)
  =\frac{1}{\phi}X^TWX,
\end{equation}
where $W={\rm diag}(w_{11},\ldots,w_{nn})$ and
$w_{ii}=h'(\xi^i)$. Since the model is an exponential family
with canonical parameterization, the Christoffel symbols are
$\Gamma^{(1)}_{rs,t}=0$ for all  $r$, $s$, and $t$
(Example~\ref{exam:a=1}), and we have
\begin{equation}\label{con_glm}
  \Gamma_{rs,t}^{(-1)}=S_{rst}=\mathbb{E}_\beta[u_ru_su_t]
  =\sum_{i=1}^n
  \frac{\kappa_{3i}}{\phi^3}{x^i}_r{x^i}_s{x^i}_t,
\end{equation}
where $\kappa_{3i}=\mathbb{E}_\beta(y_i-\mu_i)^3$ is the third
cumulant of the response vector and the first
  equality follows by \eqref{al_con}.

To discuss the expected response, we choose the inverse
link function $h(\xi^{i_0})$ for the $i_0$-th design as
the estimand. The condition \eqref{pde} for the design-specific
penalty function $\tilde{l}^{(i_0)}$ is
\begin{equation}\label{pde_glm}
  \langle{\rm grad} h(\xi^{i_0}),{\rm grad}\tilde{l}^{(i_0)}\rangle
  +\frac{1}{2}\Delta^{(-1)}h(\xi^{i_0})=0.
\end{equation}
Using the matrix representation \eqref{metric_glm}, we obtain
\[
  \Delta^{(1)}h(\xi^{i_0})
  =\sum_{r,s}g^{rs}\partial_r\partial_sh(\xi^{i_0})
  =\phi\mathbf{x}_{i_0}(X^\top WX)^{-1}\mathbf{x}_{i_0}^\top
  h''(\xi^{i_0}),
\]
where $\mathbf{x}_{i_0}$ is the $i_0$-th row vector of $X$.
It is evident that a linear link function is 1-harmonic.
According to Corollary~\ref{coro:alp}, our method reduces
the bias to be $o(n^{-1})$ by the Jeffreys prior.
Incidentally, this conclusion is the same as that for
the estimation of parameter $\beta$ discussed by Firth
\cite{Fir93}. Otherwise, by using the relation between
$\alpha$-Laplacians \eqref{al_Lap_rel} and the expression
\eqref{con_glm}, we have
\begin{align*}
  \Delta^{(-1)}h(\xi^{i_0})
  =&\phi\mathbf{x}_{i_0}(X^\top WX)^{-1}\mathbf{x}_{i_0}^\top
  h''(\xi^{i_0})\\
  &-\sum_{j=1}^n\frac{\kappa_{3j}}{\phi}
  \mathbf{x}_j(X^\top WX)^{-1}\mathbf{x}_{i_0}^\top
  \mathbf{x}_j(X^\top WX)^{-1}\mathbf{x}_j^\top h'(\xi^{i_0}).
\end{align*}
If this expression is identically zero, or $h(\xi^{i_0})$
is $(-1)$-harmonic, asymptotic unbiasedness can be
achieved without a penalty (Corollary~\ref{coro:no_adjm}).
If the inverse link function is neither linear nor
$(-1)$-harmonic, we have to solve the partial differential equation
\eqref{pde_glm}. Noting the expression
\[
  |{\rm grad}h(\xi^{i_0})|=\phi \mathbf{x}_{i_0}(X^\top WX)^{-1}
  \mathbf{x}_{i_0}^\top\{h'(\xi^{i_0})\}^2,
\]
the differential equation \eqref{psi_cond} takes the form
\[
  \frac{d\chi}{dh}=-\frac{1}{2}\frac{h''}{h'^2}+\frac{1}{2\phi^2}
  \frac{\sum_{j=1}^n\kappa_{3j}
  \mathbf{x}_j(X^\top WX)^{-1}\mathbf{x}_{i_0}^\top
  \mathbf{x}_j(X^\top WX)^{-1}\mathbf{x}_j^\top}
  {\mathbf{x}_{i_0}(X^\top WX)^{-1}\mathbf{x}_{i_0}^\top h'}.
\]
Multiplying the right-hand side by $h'(\xi^{i_0})$ and integrating
with respect to $\xi^{i_0}$ yields
\begin{equation}\label{pen_glm}
  -\frac{1}{2}\log|h'(\xi^{i_0})|
  +\frac{1}{2\phi^2}
  \int^{\xi^{i_0}}
  \frac{\sum_{j=1}^n\kappa_{3j}\mathbf{x}_j(X^\top WX)^{-1}\mathbf{x}_{i_0}^\top\mathbf{x}_j(X^\top WX)^{-1}\mathbf{x}_j^\top}
  {\mathbf{x}_{i_0}(X^\top WX)^{-1}\mathbf{x}_{i_0}^\top}d\tilde{\xi}.
\end{equation}
If \eqref{pen_glm} is reduced to an injective function $\psi$ of
$h(\xi^{i_0})$, then it becomes the desired penalty function
$\tilde{l}^{(i_0)}(\beta^1,\ldots,\beta^d)
=\chi(h(\xi^{i_0}))=\chi(h({x^{i_0}}_r\beta^r))$.
The penalized likelihood can be maximized with Fisher's scoring
on the denormalized model (see Remark~\ref{rema:fisher}).
By the Fisher metric tensor of the denormalized model manifold
$\tilde{g}_{rs}:=g_{rs}-\partial_r\partial_s\tilde{l}^{(i_0)}$,
the updated rule for parameter $\beta$ is represented as
\begin{equation}\label{scoring}
  \beta^r\leftarrow
  \sum_{s=1}^d\tilde{g}^{rs}(\sum_{t=1}^d\tilde{g}_{st}\beta^t+u_s+\partial_s\tilde{l}^{(i_0)}).
\end{equation}
  
Bias reduction of maximum likelihood estimates of regression
coefficients, or parameters, in logistic regressions has been
actively studied, including \cite{Cop88} and \cite{Fir93}.
In logistic regression, the expected response is the event
frequency of each design, which is given by the logistic
function of the linear predictor
$\pi_i=e^{\xi^i}/(1+e^{\xi^i})$ for the $i$-th design.
The maximum likelihood estimate of parameter $\beta$ tends
to be biased away from the origin $\beta=0$. The `separation'
phenomenon, in which a parameter estimate diverges to infinity,
causes instability of the logistic regression. This phenomenon
occurs when the event occurs or not is determined by whether
the linear predictor is positive or negative in a sample.
Firth's bias-reduction method has been a tool to avoid
this separation phenomenon \cite{Kos14}. Therefore, a bias
reduction requires some degree of `shrinkage' of an estimate
toward the origin.

In logistic regression, we are also interested in
the event frequency. We can estimate it by the plug-in
estimator with the estimates of $\beta$, but removing
the bias of $\beta$ introduces bias in the plug-in
estimator. Since the logistic regression is
an exponential family and the linear predictor is
the canonical parameter, the penalty function of Firth
is $\log g/2$, where $g$ is the determinant of the Fisher
metric tensor (see Introduction). 
Elgmati's \cite{Elg15} proposal uses $\lambda\log g$ as
the penalty function, where $\lambda\in[0,1/2]$ is
a tuning parameter. In practice, we do not know the optimal
$\lambda$. Elgmati et al.'s proposed choices of
$\lambda$ for the case of two design points such that
it minimizes the mean squared error under some a priori
assumptions.

As an illustration, we discuss a variant of the univariate
logistic regression used by Copus \cite{Cop88} with the severe
setting considered by Firth \cite{Fir93}; that is, one
observation is made at each of the five design points with
$\xi^i={x^i}_1\beta^1=i\beta$ for $i=0,\pm1,\pm2$, 
where the indexing is changed from \eqref{design_glm}
  such that the number indicates the design point of each observation.
The Fisher metric tensor \eqref{metric_glm} is
\[
  g_{11}=4\pi_{-2}(1-\pi_{-2})+
  \pi_{-1}(1-\pi_{-1})+\pi_{1}(1-\pi_{1})+4\pi_{2}(1-\pi_{2}).
\]
The penalty function can be obtained by calculating
\eqref{pen_glm}; however, this model is one-dimensional, and
the application of Corollary~\ref{coro:1dim} is easier.
Given that the 0-parallel volume element is the Jeffreys prior,
we immediately obtain the penalty function
\[
  \tilde{l}^{(i_0)}(\beta)=
  \frac{1-i_0}{2}\beta+\log(1+e^{i_0\beta})
  -\log(1+e^\beta)-\log(1+e^{2\beta})
  +\frac{1}{2}\log \{(1+e^\beta)^4+4e^{2\beta}\}.
\]

The sufficient statistic $t_1=\sum_{i=-2}^2y_i{x^i}_1=\sum_{i=-2}^2iy_i$
has seven possible values. Table~1
is an extension of Table 1 in \cite{Fir93}. It displays
the distribution of the estimates of parameter $\beta$ for
each value by the three methods: the maximum likelihood
estimation (MLE), the maximizer of the penalized likelihood
proposed by Firth (Firth), and ours (asymptotic unbiased
estimator, abbreviated as AUE).
The AUE of $\beta$ is design-specific since it is chosen to reduce
the bias of the estimate of the event frequency of each design.
The limiting behaviors with $\beta\to\pm\infty$ show that
the maximum likelihood estimate $\hat{\beta}_{\rm MLE}$ does not
exist if $t_1=\pm 3$ and the AUE of $\beta$ for the event frequencies
$\pi_{\pm 1}$ or $\pi_{\pm 2}$ does not exist if $t_1=\pm 3$.

The results were obtained with Fisher's scoring, and the iterations
were continued until the absolute value of the update of
the estimate became smaller than $10^{-5}$. The average number
of required iterations was less than 50. Comparing the results
of Firth with AUE, it can be noted that in AUE, shrinkage is
weak, or even `anti-shrinkage' occurs for large absolute values
of the linear predictor. This is reasonable because estimating
responses becomes easier with an increase in the absolute
value of the predictor.

Table~2 summarizes the performance of
the estimators of the event frequencies.
The maximum likelihood estimates are in the rows of MLE.
The plug-in estimates by the maximizer of Firth's penalized
likelihood and Elgmati et al's extension of
  Firth's method are in the rows of Firth and Elgmati, respectively,
and those of our penalized likelihood are in the rows of AUE.
The mean squared errors are shown in the columns of MSE.
In the Elgmati, we chose tuning parameter
$\lambda$, which minimized the mean squared errors
within $\{0,0.1,0.2,0.3,0.4,0.5\}$. For $\beta=0.5$ and 1,
$\lambda=0.5$ (Firth) was optimal, so we omitted the rows
for the Elgmati.
\color{black}
The results showed that AUE has a significantly smaller
bias than MLE. The mean squared errors were similar in
magnitude. This finding is consistent with the second-order
efficiency of both estimators (Remark~\ref{rema:mse}).
The bias of Firth increased with the increases in
$\beta$, which comes from the non-linearity of the logistic
function.
Elgmati improved the bias but still had a bias for
large absolute values of the linear predictor, even with
the optimal $\lambda$.

\begin{table}[t]
  \small
  \caption{Distribution of the estimates of the parameter
    in a logistic regression model.}
  \label{table:tab1}
  \centering
  \begin{tabular}{rrrrrrrrr}
  & & & \multicolumn{3}{c}{AUE} & \multicolumn{3}{c}{Sampling probabilities} \\
  $t_1$  & MLE & Firth & $\pi_{\pm2}$ & $\pi_{\pm1}$ & $\pi_0$ & $\beta=0.5$ & $\beta=1$ & $\beta=1.5$\\
  \hline
  $-3$ & $-\infty$&$-1.383$ &$-\infty$& $-\infty$  & $-1.383$ & 0.010 & 0.001 & 0.000 \\
  $-2$ & $-1.012$  &$-0.683$ & $-1.205$ & $-0.771$ & $-0.683$ & 0.034 & 0.006 & 0.001 \\
  $-1$ & $-0.420$  &$-0.307$ & $-0.452$ & $-0.335$ & $-0.307$ & 0.084 & 0.023 & 0.005 \\
  $0$  & 0        & 0      & 0       & 0       & 0       & 0.185 & 0.083 & 0.027 \\ 
  $1$  &  $0.420$  & $0.307$ &  $0.452$ &  $0.335$ &  $0.307$ & 0.229 & 0.168 & 0.091 \\
  $2$  &  $1.012$  & $0.683$ &  $1.205$ &  $0.771$ &  $0.683$ & 0.251 & 0.305 & 0.271 \\
  $3$  & $+\infty$ & $1.383$ & $+\infty$& $+\infty$&  $1.383$ & 0.207 & 0.415 & 0.607 \\
  \hline
  \end{tabular}
\end{table}

\begin{table}[t]
  \caption{Biases and mean squared errors of the estimators of
    the success probabilities in a logistic regression model.}
  \label{table:tab2}
\small
\centering
$\beta=0.5$\\
\begin{tabular}{rrrrrrrrrrr}
  &\multicolumn{2}{c}{$\pi_{-2}=0.269$}&\multicolumn{2}{c}{$\pi_{-1}=0.378$}
  &\multicolumn{2}{c}{$\pi_0=0.5$}     &\multicolumn{2}{c}{$\pi_1=0.622$}
  &\multicolumn{2}{c}{$\pi_2=0.731$}\\
     &Bias   &MSE  &Bias    &MSE  &Bias &MSE  &Bias    &MSE  &Bias    &MSE\\
\hline  
MLE  &$0.021$&0.065&$-0.042$&0.048&0.098&0.054& $0.042$&0.048&$-0.021$&0.065\\
Firth&$0.058$&0.048& $0.017$&0.018&0    &0    &$-0.017$&0.018&$-0.058$&0.048\\
AUE  &$0.011$&0.070&$-0.028$&0.044&0    &0    & $0.028$&0.044&$-0.011$&0.070\\
\hline
\end{tabular}

\smallskip

$\beta=1$\\
\begin{tabular}{rrrrrrrrrrr}
  &\multicolumn{2}{c}{$\pi_{-2}=0.119$}&\multicolumn{2}{c}{$\pi_{-1}=0.269$}
  &\multicolumn{2}{c}{$\pi_0=0.5$}     &\multicolumn{2}{c}{$\pi_1=0.731$}
  &\multicolumn{2}{c}{$\pi_2=0.881$}\\
     &Bias   &MSE  &Bias    &MSE  &Bias &MSE  &Bias    &MSE  &Bias    &MSE\\
\hline  
MLE  &$0.030$&0.035&$-0.061$&0.041&0.207&0.104& $0.061$&0.041&$-0.030$&0.035\\
Firth&$0.088$&0.034& $0.047$&0.015&0    &0    &$-0.047$&0.015&$-0.088$&0.034\\
AUE  &$0.018$&0.035&$-0.043$&0.043&0    &0    & $0.043$&0.043&$-0.018$&0.035\\
\hline
\end{tabular}

\smallskip

$\beta=1.5$\\
\begin{tabular}{rrrrrrrrrrr}
  &\multicolumn{2}{c}{$\pi_{-2}=0.047$}&\multicolumn{2}{c}{$\pi_{-1}=0.182$}
  &\multicolumn{2}{c}{$\pi_0=0.5$}     &\multicolumn{2}{c}{$\pi_1=0.818$}
  &\multicolumn{2}{c}{$\pi_2=0.953$}\\
     &Bias   &MSE  &Bias    &MSE  &Bias &MSE  &Bias    &MSE  &Bias    &MSE  \\
\hline  
MLE  &$0.029$&0.016&$-0.058$&0.030&0.303&0.152& $0.058$&0.030&$-0.029$&0.016\\
Firth&$0.092$&0.023& $0.085$&0.015&0    &0    &$-0.085$&0.015&$-0.092$&0.023\\
Elgmati&$0.062$&0.020& $0.040$&0.014&0  &0    &$-0.040$&0.014&$-0.062$&0.020\\
AUE  &$0.018$&0.015&$-0.042$&0.034&0    &0    & $0.042$&0.034&$-0.018$&0.015\\
\hline
\end{tabular}
\end{table}

\subsection{Linear mixed-effects models}
\label{subs:exam2}

Linear mixed-effects models are among the most widely used
classes of statistical models (see Section 3.4 of \cite{LC98}).
In this subsection, we explain how our bias reduction works for
the estimation of the `shrinkage factor' in linear mixed-effects
models.

Efron and Morris \cite{EM72} discussed an estimation of the mean
vector $z$ in $\mathbb{R}^n$, $n\ge 3$ of a linear mixed-effects
model with unknown variance $\sigma^2>0$:
\begin{equation}\label{efron}
  x_i|z_i \sim \text{N}(z_i,1), \quad
  z_i \overset{{\rm iid}}{\sim} \text{N}(0,\sigma^2),
  \quad i\in\{1,\ldots,n\}.
\end{equation}
If we regard the normal distribution $\text{N}(0,\sigma^2)$ as
the prior distribution for $z_i$, $i\in\{1,\ldots,n\}$,
the marginal log-likelihood is
\begin{equation}\label{like_efron}
  l(\sigma^2;x)
  =-\frac{|x|^2}{2(1+\sigma^2)}
   -\frac{n}{2}\log\left\{2\pi(1+\sigma^2)\right\}
\end{equation}
which is obtained by integrating out $z$ from \eqref{efron}.
The estimator of $\sigma^2$ which maximizes \eqref{like_efron}
is $\hat{\sigma^2}_{\rm MLE}=|x|^2/n-1$. The best predictor of
the mean vector $z$ under the squared error loss is (see
Section 3.5 of \cite{LC98})
$\hat{z}(\sigma^2)=\{1-s(\sigma^2)\}x$,
where $s(\sigma^2):=1/(1+\sigma^2)$
is called the {\it shrinkage factor},
because the predictor $\hat{z}$ is shrunken toward the ground
mean, 0, as the variance $\sigma^2$ decreases. In applications
of the model we are interested in the shrinkage factor.
The plug-in estimator $s(\hat{\sigma}_{\rm MLE}^2)$ has bias,
$2/\{(1+\sigma^2)n\}+o(n^{-1})$, and our bias reduction can be
applied to remove this bias as follows.

The log-likelihood \eqref{like_efron} is a one-dimensional
exponential family with the canonical parameter
$\xi=-1/\{2(1+\sigma^2)\}$. The manifold is 1-flat,
and the canonical parameter is a 1-affine coordinate.
The Fisher metric tensor is $g_{11}=n/(2\xi^2)$. As $|x|^2$
is a complete sufficient statistic, Corollary~\ref{coro:1dim}
immediately provides the UMVUE of the shrinkage factor up to
$O(n^{-1})$. By substituting $f(\xi)=s(\sigma^2)=-2\xi$ into
\eqref{1dim_af}, we obtain
$\tilde{l}(\xi)=\log h(0;1)+{\rm const.}=\log(1+\sigma^2)$,
which coincides with the Jeffreys prior.

A two-parameter family of linear mixed-effects models that
generalizes the model expressed by \eqref{efron} is
\[
  x_{ij}|z_i\overset{\text{iid}}{\sim}\text{N}(z_i,\delta),
  \quad j\in\{1,...,m_i\},\quad
  z_i\overset{\text{iid}}{\sim}\text{N}(0,\alpha),
  \quad i\in\{1,...,n\},
\]
where the variances $\alpha>0$ and $\delta>0$ are unknown.
The parameter is denoted by $\xi=(\alpha,\delta)$.
This model is
a simplified version of the linear regression model discussed
in \cite{BHF88}. Let us consider the asymptotics $n\to\infty$
under the assumption that $\sup_i m_i<\infty$. We exclude
the case of $m_1=\cdots=m_n=1$, which reduces to
\eqref{efron}. The best predictor of each mean $z_i$ is
\[
  \hat{z}_i=\{1-s^{(i)}(\xi)\}\bar{x}_i, \quad
  s^{(i)}(\xi)
  :=\frac{\delta}{\delta+m_i\alpha}, \quad
  \bar{x}_i=\frac{\sum_{j=1}^{m_i}x_{ij}}{m_i}
\]
for $i\in\{1,\ldots,n\}$. The log-likelihood is
\[
  l(\xi;x)=\frac{1}{2\delta}
  \left\{
  \sum_{i=1}^n\frac{(m_i\bar{x}_i)^2\alpha}{\delta+m_i\alpha}
  -\sum_{i=1}^n\sum_{j=1}^{m_i}x_{ij}^2\right\}
  -\frac{1}{2}\sum_{i=1}^n
  \log\left\{(\delta+m_i\alpha)\delta^{m_i-1}\right\}+{\rm const.}
\]

The model manifold is the orthant
$\{\xi=(\alpha,\delta):\xi\in\mathbb{R}^2_{>0}\}$.
The Fisher metric tensor is \citep{HL21}
\[
  (g_{ij})=\frac{1}{2}\left(
  \begin{array}{cc}
  \sum_{i=1}^{n}m_i^2(\delta+m_i\alpha)^{-2}&
  \sum_{i=1}^{n}m_i(\delta+m_i\alpha)^{-2}\\
  \sum_{i=1}^{n}m_i(\delta+m_i\alpha)^{-2}&
  \sum_{i=1}^{n}(\delta+m_i\alpha)^{-2}+(m-n)\delta^{-2}
  \end{array}
  \right)
\]
with the determinant 
\begin{align*}
  g&=\frac{1}{4}
  \left\{\sum_{i<j}\frac{(m_i-m_j)^2}{(\delta+m_i\alpha)^2
  (\delta+m_j\alpha)^2}
  +\frac{m-n}{\delta^2}\sum_{i=1}^{n}\frac{m_i^2}{(\delta+m_i\alpha)^2}
  \right\}>0,
\end{align*}
where $m:=\sum_{i=1}^n m_i$. After some calculations,
we obtain the skewness tensor:
\begin{align*}
  &S_{\alpha\alpha\alpha}
   =\sum_{i=1}^{n}\frac{m_i^3}{(\delta+m_i\alpha)^3},\quad
   S_{\alpha\alpha\delta}=\sum_{i=1}^{n}\frac{m_i^2}{(\delta+m_i\alpha)^3},\\
  &S_{\alpha\delta\delta}=\sum_{i=1}^{n}\frac{m_i}{(\delta+m_i\alpha)^3},\quad
   S_{\delta\delta\delta}=\sum_{i=1}^{n}\frac{1}{(\delta+m_i\alpha)^3}
   +\frac{m-n}{\delta^3}.
\end{align*}
We can confirm that $\Gamma^{(-1)}_{ij,k}=0$ for all $i$, $j$,
and $k$. Therefore, the model manifold is $(-1)$-flat, and $\xi$
is a $(-1)$-affine coordinate system. Corollary~\ref{coro:firth}
implies that the maximum likelihood estimate $\hat{\xi}_{\rm MLE}$
of parameter $\xi$ is second-order unbiased without penalty.

The bias reduction of the shrinkage factor $s^{(i)}$ is
as follows. The condition \eqref{pde} for the penalty function
$\tilde{l}^{(i)}$ becomes
$\langle{\rm grad} s^{(i)},{\rm grad}\tilde{l}^{(i)}\rangle
+\Delta^{(-1)}s^{(i)}/2=0$.
Since
\[
  -\frac{1}{2}\frac{\Delta^{(-1)}s^{(i)}}{|{\rm grad}s^{(i)}|}
  =\frac{1}{s^{(i)}}\left(\frac{n}{m}-1\right),
\]
the differential equation \eqref{psi_cond} can be integrated
immediately and we have
\[
  \tilde{l}^{(i)}(\xi)=\chi(s^{(i)})+{\rm const.}
  =\left(1-\frac{n}{m}\right)
  \log\left(1+\frac{m_i\alpha}{\delta}\right), \quad
  m=\sum_{i=1}^nm_i.
\]

Table~3 summarizes the performance of
the estimators of the shrinkage factor $s^{(1)}$.
The results for the plug-in estimator with the maximum likelihood
estimate of the parameter $\xi=(\alpha,\delta)$ are shown in
the row of MLE, wheres those with the maximizer of the penalized
likelihood are shown in the row of AUE. The mean squared errors are
in the columns of MSE. We set $n=50$ and $m_1=\cdots=m_{50}=10$.
The results were obtained from 10,000 experiments. The estimates
of parameter $\xi$ were computed with Fisher's scoring, as described
in Section~\ref{subs:exam1}. The average number of iterations until
the absolute value of the update of the estimate of the shrinkage
factor became smaller than $10^{-5}$ was equal to or less than seven.
The results showed that AUE had a significantly smaller bias
than MLE. The MSEs were in similar magnitude.
\begin{table}
  \caption{Biases and mean squared errors of the estimators
    of the shrinkage factor in a linear mixed-effects model.}
  \label{table:tab3}
  \small
  \centering

  \begin{tabular}{crrrrrr}
  &\multicolumn{2}{c}{$(\alpha,\delta)=(1,1)$}&\multicolumn{2}{c}{$(\alpha,\delta)=(1,5)$}&\multicolumn{2}{c}{$(\alpha,\delta)=(1,10)$}\\
  &\multicolumn{2}{c}{$s^{(1)}=0.09090$}&\multicolumn{2}{c}{$s^{(1)}=0.33333$}&\multicolumn{2}{c}{$s^{(1)}=0.50000$}\\
  &\multicolumn{1}{c}{Bias}&\multicolumn{1}{c}{MSE}&\multicolumn{1}{c}{Bias}&\multicolumn{1}{c}{MSE}&\multicolumn{1}{c}{Bias}&\multicolumn{1}{c}{MSE}\\
  \hline
  MLE&$ 0.00370$&$0.00044$&$ 0.01387$&$0.00602$&$ 0.02081$&$0.01337$\\
  AUE      &$-0.00007$&$0.00040$&$ 0.00004$&$0.00537$&$ 0.00011$&$0.01198$\\
  \hline
\end{tabular}

\end{table}

\subsection{Location-scale family and hyperbolic space}
\label{subs:exam3}

The location-scale family is a group family
whose group of transformations is
$x\mapsto \sigma x+\mu \in {\rm PSL}(2,\mathbb{R})$ for
$x\in\mathbb{R}$, where $\mu\in\mathbb{R}$ and $\sigma>0$
are the location and scale parameter, respectively, and
${\rm PSL}(2,\mathbb{R})$ denotes the projective special linear
group in $\mathbb{R}^2$ (see Sections 3.3 and 4.4 of \cite{LC98}).
From a geometrical perspective, the family is represented
as a two-dimensional hyperbolic space and is not $\alpha$-flat
for any $\alpha$. The purpose of this subsection is to
illustrate how our bias-reduction method works for non-flat
manifolds. The Neyman--Scott model, in which the number of
parameters increases with the number of observations and
the maximum likelihood estimator is inconsistent, can
also be discussed in a similar manner, because the model
manifold is also a hyperbolic space.

First, we explain why the location-scale family is not flat.
Let $p_0(z)$, $z\in\mathbb{R}$ is a probability density
symmetric about the origin. The location-scale family of
the standard density $p_0$ is given by the density
\begin{equation}\label{ls}
  p(x)dx=\frac{1}{\sigma}p_0\left(\frac{x-\mu}{\sigma}\right)dx,
\end{equation}
where the parameter is denoted by $\xi=(\mu,\sigma)$.
The one-sample log-likelihood is
$l(\xi;x)=l_0\{(x-\mu)/\sigma\}-\log\sigma$,
where $l_0(z)=\log p_0(z)$, and we have
\begin{align*}
  &\frac{\partial l}{\partial\mu}=-\frac{l'_0}{\sigma}, \quad
  \frac{\partial l}{\partial\sigma}=-\frac{zl'_0}{\sigma}-\frac{1}{\sigma},
  \quad
  \frac{\partial^2 l}{\partial\mu^2}=\frac{l''_0}{\sigma^2},\quad
  \frac{\partial^2 l}{\partial\sigma^2}
  =\frac{2zl'_0}{\sigma^2}+\frac{z^2l''_0}{\sigma^2}+\frac{1}{\sigma^2},\\
  &\frac{\partial^2 l}{\partial\sigma\partial\mu}
  =\frac{l'_0}{\sigma^2}+\frac{zl''_0}{\sigma^2}.
\end{align*}
It can be seen that
$\mathbb{E}_\xi[\partial_\mu l]=\mathbb{E}_\xi[\partial_\sigma l]=\mathbb{E}_\xi[\partial_\mu\partial_\sigma l]=0$,
$\mathbb{E}_\xi(-\partial^2_\mu l)=-\mathbb{E}(l_0'')\sigma^{-2}$, and
$\mathbb{E}_\xi(-\partial^2_\sigma l)={1-\mathbb{E}(z^2l_0'')}\sigma^{-2}$,
where expectations $\mathbb{E}_\xi$ are taken with respect to
the density of \eqref{ls}. For simplicity, we assume the standard
density $p_0(z)$ satisfies the condition
$\mathbb{E}(l''_0)=\mathbb{E}(z^2l''_0)-1=:-R^2<0$,
which is a choice of the scale. Then, the Fisher metric tensor
becomes $g_{ij}=(R/\sigma)^2\delta_{ij}$. This metric is known as
the Poincar\'e metric of the upper half-plane model of
the two-dimensional hyperbolic space of the sectional curvature
$(-R^{-2})$. The skewness tensor has components
$S_{\mu\mu\mu}=S_{\mu\sigma\sigma}=0$,
$S_{\mu\mu\sigma}=c_1\sigma^{-3}$, and
$S_{\sigma\sigma\sigma}=c_2\sigma^{-3}$,
where $c_1:=-\mathbb{E}(zl'^3_0+l^{'2}_0)$,
$c_2:=-\mathbb{E}(zl'_0+1)^3$.
The vector derived by the contraction of the skewness tensor
has components $S^\mu=0$, $S^\sigma=c\sigma R^{-4}$, $c:=c_1+c_2$.
The Christoffel symbols are
\begin{align*}
  &\Gamma^{(\alpha)}_{\sigma\sigma,\mu}=\Gamma^{(\alpha)}_{\mu\sigma,\sigma}=
  \Gamma^{(\alpha)}_{\mu\mu,\mu}=0, ~~
  \Gamma^{(\alpha)}_{\sigma\sigma,\sigma}=
  -\frac{R^2}{\sigma^3}-\frac{\alpha}{2}\frac{c_2}{\sigma^3}, ~~
  \Gamma^{(\alpha)}_{\mu\sigma,\mu}=
  -\frac{R^2}{\sigma^3}-\frac{\alpha}{2}\frac{c_1}{\sigma^3},\\
  &\Gamma^{(\alpha)}_{\mu\mu,\sigma}=
  \frac{R^2}{\sigma^3}-\frac{\alpha}{2}\frac{c_1}{\sigma^3},
\end{align*}
and the non-zero components of the Riemann curvature tensor
are 
\begin{align*}
&R^{(\alpha)}_{\mu\sigma\mu\sigma}=-\frac{R^2}{\sigma^4}
\left(1+\frac{\alpha}{2}\frac{c_1}{R^2}\right)
\left(1+\frac{\alpha}{2}\frac{c_1-c_2}{R^2}\right),\\
&R^{(\alpha)}_{\sigma\mu\mu\sigma}=\frac{R^2}{\sigma^4}
\left(1-\frac{\alpha}{2}\frac{c_1}{R^2}\right)
\left(1-\frac{\alpha}{2}\frac{c_1-c_2}{R^2}\right),
\end{align*}
which shows that the location-scale model is not $\alpha$-flat
for any $\alpha$,
  because an affine connection is flat if and only if
  the curvature tensor field vanishes identically on $M$
  (see Theorem II.9.1 of \cite{KN63} and Appendix~A.4).

Section~4.2 of \cite{Fir93} discussed the bias reduction
of the maximum likelihood estimate of the variance of
the normal distribution with parameterization $(\mu,\sigma^2)$.
He solved a system of two partial differential equations
for a single penalty function
and found that the bias-reduced estimator is the unbiased
sample variance. Our bias-reduction method reproduces his
result.

Substituting $f(\xi)=\sigma^2$ into
the condition \eqref{pde} yields the partial differential
equation
\begin{equation*}
  \frac{\partial\tilde{l}}{\partial\sigma}
  =-\left(1-\frac{c}{2R^2}\right)\frac{1}{2\sigma},
\end{equation*}
and a solution is $\tilde{l}(\xi)=-\{1-c/(2R^2)\}\log\sigma/2$.
The system of estimating equations for an $n$-sample is
\[
  \frac{\partial l^*}{\partial\sigma}=
  \frac{2}{\sigma^3}\sum_{i=1}^n(x_i-\mu)^2-\frac{n}{\sigma}
  -\left(1-\frac{c}{2R^2}\right)\frac{1}{2\sigma}=0, \quad
  \frac{\partial l^*}{\partial\mu}=\frac{2}{\sigma^2}\sum_{i=1}^n
  (x_i-\mu)^2=0, 
\]
where $l^*(\xi;x):=l(\xi;x_1,\ldots,x_n)+\tilde{l}(\xi)$.
The solution is $\hat{\mu}=\bar{x}$ and
\begin{equation}\label{var_hyp}
  \frac{\hat{\sigma}^2}{2}=\frac{n}{n+(1-c/(2R^2))/2}s^2, \quad
  s^2:=\frac{1}{n}\sum_{i=1}^n(x_i-\bar{x})^2,
\end{equation}
where $\bar{x}$ and $s^2$ are the sample mean and variance,
respectively. If we choose $p_0(z)=e^{-z^2}/\sqrt{\pi}$,
where $c_1=4$, $c_2=8$, and $R^2=2$, \eqref{var_hyp} becomes
the unbiased sample variance of the normal distribution.

Lauritzen \cite{Lau87} proposed hypothesis testing for
the coefficient of variation $\gamma:=\sigma/\mu$ for
the normal distribution. He called $s/\bar{x}$ the geometric
ancillary test statistic for the hypothesis that $\gamma=\gamma_0$
for certain $\gamma_0$ values. Here, $\bar{x}$ and $s$ are
the maximum likelihood estimates of $\mu$ and $\sigma$,
respectively, and the test statistic has a bias of $O(n^{-1})$.
Our bias-reduction method can be applied to remove
this bias as follows.

The condition of \eqref{pde} for the penalty function
$\tilde{l}$ becomes
$\langle{\rm grad}\tilde{l},{\rm grad}\gamma\rangle+
\Delta^{(-1)}\gamma/2=0$.
Since
\[
-\frac{1}{2}\frac{\Delta^{(-1)}\gamma}{|{\rm grad}\gamma|}
=-\frac{\gamma^2-c/(4R^2)}{\gamma+\gamma^3},
\]
the differential equation \eqref{psi_cond} can be integrated
immediately. If we choose $p_0(z)=e^{-z^2}/\sqrt{\pi}$,
a solution is
\[
  \tilde{l}(\xi)=\chi(\gamma)+{\rm const.}=
  \frac{c}{4R^2}\log\frac{\sigma}{\mu}-
  \frac{1}{2}\left(1+\frac{c}{4R^2}\right)
  \log\left\{1+\left(\frac{\sigma}{\mu}\right)^2\right\},
\]
and the system of estimating equations for
$\hat{\xi}=(\hat{\mu},\hat{\sigma})$ that maximizes
the penalized likelihood is
\begin{equation}\label{est_gamma}
  \mu=\bar{x}
  +\frac{5}{4n}\frac{\sigma^4}{\mu(\mu^2+\sigma^2)}
  -\frac{3}{4n}\frac{\sigma^2}{\mu}, \quad
  \sigma^2=2(\bar{x}-\mu)^2+2s^2
  -\frac{5}{2n}\frac{\sigma^4}{\mu^2+\sigma^2}+\frac{3}{2n}\sigma^2.
\end{equation}

Table~4 summarizes the performance of plug-in
estimators of the coefficient of variation $\gamma$:
the statistic $s/\bar{x}$ and the plug-in estimator
$\hat{\xi}=\hat{\sigma}/\hat{\mu}$. We set $n=100$.
The results were obtained from 10,000 experiments.
The results for the estimator $s/\bar{x}$ are in the row
of MLE, whereas those for the estimator $\hat{\sigma}/\hat{\mu}$
are in the row of AUE. The system \eqref{est_gamma} was
numerically solved by iteratively updating the estimates of
$\mu$ and $\sigma$ through the substitution of the current
estimates into the right-hand side. This is repeated until
the ratio of the estimates converged, which is equivalent to
the gradient descent. The average number of iterations until
the absolute value of the update became smaller than $10^{-5}$
was less than five. The results did not change when $\mu$ and
$\sigma$ were multiplied by a common positive number.
The results indicate that $\hat{\xi}=\hat{\sigma}/\hat{\mu}$
has a significantly smaller bias than $s/\bar{x}$. The MSEs
were of similar magnitudes for $\gamma=0.2$ and 1, but
the MSE of MLE was larger than AUE for $\gamma=2$.

Let us discuss the estimation of of squared geodesic distance
$t^2$ from the standard density, where
$t=\text{dis}((0,1),(\mu,\sigma))$. For each point
$(\mu,\sigma)$, there exists the unique geodesic
joining $(0,1)$ and $(\mu,\sigma)$, where the geodesic distance
$t$ is the length of the geodesic
(see Section~\ref{subs:bias4} and Appendix~A.3).
From a statistical perspective, the squared geodesic distance
is a natural estimand of the deviation from a hypothesized density.

The geodesic equations ((A.5) in Appendix~A.3) are
$\ddot{\mu}-2\dot{\mu}\dot{\sigma}/\sigma=0$ and
$\ddot{\sigma}+(\dot{\mu}^2-\dot{\sigma}^2)/\sigma=0$,
where $\dot{\mu}=d\mu/dt$, $\dot{\sigma}=d\sigma/dt$,
$\ddot{\mu}=d^2\mu/dt^2$, and $\ddot{\sigma}=d^2\sigma/dt^2$.
The solution satisfying the initial condition
$\xi(0)=(\mu(0),\sigma(0))=(0,1)$ and
$\dot{\xi}(0)=(\dot{\mu}(0),\dot{\sigma}(0))$ is
\begin{equation}\label{co_tr}
\mu(t)=\frac{R\dot{\mu}(0)\tanh(t/R)}
        {1-R\dot{\sigma}(0)\tanh(t/R)},\quad
\sigma(t)=\frac{1}
        {\cosh(t/R)-R\dot{\sigma}(0)\sinh(t/R)}.   
\end{equation}
As expected, the geodesic distance is
  $\text{dis}((0,1),(\mu(t_0),\sigma(t_0)))=
  \int_0^{t_0}\frac{R}{\sigma}\sqrt{\dot{\mu}^2+\dot{\sigma}^2}dt
  =t_0$.
The geodesic is a portion of the semicircle on the upper
half-plane (see Figure~\ref{fig:fig4}):
\[
  \left(\mu-\frac{\dot{\sigma}(0)}{\dot{\mu}(0)}\right)^2
  +\sigma^2=\frac{1}{(R\dot{\mu}(0))^2},
  \quad \sigma>0.
\]
\begin{figure}
  \centering
  \includegraphics[height=40mm]{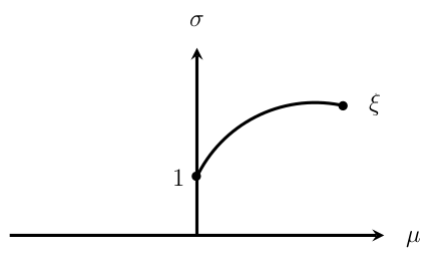}
  \caption{The geodesic distance between a point
    $\xi=(\mu,\sigma)$ and the point $(0,1)$, which corresponds
    to the standard density, is the length of the curve shown
    in the Poincar\'e metric. Our task is to find the estimate
    $\hat{\xi}$ such that $t^2(\hat{\xi})$ is an asymptotically
    unbiased estimate of squared geodesic distance $t^2(\xi)$.}
  \label{fig:fig4}
\end{figure}
The normal coordinate system $\zeta$ at $(0,1)$ is defined as
$\zeta=(\zeta^1,\zeta^2)=(\dot{\mu}(0)t,\dot{\sigma}(0)t)$,
where $(\zeta^1)^2+(\zeta^2)^2=(t/R)^2$.
The relationship \eqref{co_tr} can be regarded as a change of
the basis between $\eta$ and $\xi$, and the inverse is given by
\[
  \zeta^1=\frac{\rho}{\sqrt{\nu^2-1}}\frac{\mu}{\sigma}, \quad
  \zeta^2=\frac{\rho}{\sqrt{\nu^2-1}}\left(\nu-\frac{1}{\sigma}\right),
\]
where $\nu=(\mu^2+\sigma^2+1)/(2\sigma)\ge 1$ and
$\rho=\log(\nu+\sqrt{\nu^2-1})$. The squared geodesic distance
is expressed as a function of $\xi$:
\begin{equation}\label{gdist}
  t^2(\xi)=R^2\rho^2
  =R^2\left[
    \log\left\{
      \frac{\mu^2+\sigma^2+1}{2\sigma}+
      \sqrt{\left(\frac{\mu^2+\sigma^2+1}{2\sigma}\right)^2-1}
      \right\}\right]^2.
\end{equation}
  
Theorem~\ref{theo:cond0} provides us an asymptotic unbiased
estimator of the squared geodesic distance of the bias of
$O(n^{-1})$. The expression \eqref{cond0} contains
the determinant of the Fisher metric tensor in the normal
coordinate system $\eta$.
Therefore, we have to calculate it and then express it in
the original coordinate system $\xi$. By using the transformation
rule of components of a tensor under the change of basis (see
Section I.2 of \cite{KN63}):
$h_{ij}(\eta)=(\partial\xi^k/\partial\eta^i)(\partial\xi^l/\partial\eta^j)g_{kl}(\xi)$,
we obtain
\begin{align*}
  h(\eta(\xi))=&
  \left(\frac{R\mu(\nu\sigma-1)}{\sigma^2(\nu^2-1)}\right)^4
  \left(1+
  \frac{\mu^2}{(\nu\sigma-1)^2}+
  \frac{\sigma^2(\nu^2-1)^2}{\rho^2\mu^2}\right)\\
  &\times\left(1+
  \frac{(\nu\sigma-1)^2}{\mu^2}+
  \frac{\sigma^2(\nu^2-1)^2}{\rho^2(\nu\sigma-1)^2}\right).
\end{align*}
The penalty function is
\begin{equation}\label{pen_hyp}
  \tilde{l}(\xi)=\frac{c}{4R^2}\log\sigma-\log\rho-\frac{1}{4}
  \log h(\eta(\xi)).
\end{equation}
It is straightforward to obtain the system of estimating
equations for $\hat{\xi}$ that maximizes the penalized
likelihood. The explicit expressions are provided in
Appendix~B.

Table~5 summarizes the performance of
the estimators of the squared geodesic distance:
the one is by substituting $(\bar{x},s)$ for $(\mu,\sigma)$
(MLE), and the other is by substituting $(\hat{\mu},\hat{\sigma})$
(AUE), where $(\hat{\mu},\hat{\sigma})$ is the maximizer of the
penalized likelihood. The density was specified to be
$p(z)=e^{-z^2}{\sqrt{\pi}}$. We set $n=100$, and the results
were based on 10,000 experiments. The average number of
iterations until the absolute value of the update of the estimate
of the geodesic distance became smaller than $10^{-5}$ was less
than of equal to four. The results showed that AUE had
a significantly smaller bias than MLE. The MSEs were similar
in magnitude.
It might be surprising that we can reduce the bias of the maximum
likelihood estimate for such a complicated estimand function as
in \eqref{gdist}.

\begin{table}
  \caption{Biases and mean squared errors of the estimators of
    the coefficients of variation of the location-scale family.}
  \label{table:tab4}
\small
\centering
\begin{tabular}{crrrrrr}
  &\multicolumn{2}{c}{$\gamma=0.2$}&\multicolumn{2}{c}{$\gamma=1$}&\multicolumn{2}{c}{$\gamma=2$}\\
  &\multicolumn{1}{c}{bias}&\multicolumn{1}{c}{MSE}&\multicolumn{1}{c}{bias}&\multicolumn{1}{c}{MSE}&\multicolumn{1}{c}{bias}&\multicolumn{1}{c}{MSE}\\
  \hline
  MLE&$-0.00294$&$0.00042$&$-0.00461$&$0.02080$&$ 0.06360$&$0.30433$\\
  AUE&$-0.00000$&$0.00042$&$-0.00023$&$0.02006$&$-0.00301$&$0.20801$\\
  \hline
\end{tabular}
\end{table}

\begin{table}
\small
\centering
\caption{Biases and mean squared errors of the estimators of
  the geodesic distance.}
\label{table:tab5}
\begin{tabular}{crrrrrr}
  &\multicolumn{2}{c}{$(\mu,\sigma)=(1,1)$}&\multicolumn{2}{c}{$(\mu,\sigma)=(5,1)$}&\multicolumn{2}{c}{$(\mu,\sigma)=(0,0.1)$}\\
  &\multicolumn{2}{c}{$t^2=1.85252$}&\multicolumn{2}{c}{$t^2=21.70696$}&\multicolumn{2}{c}{$t^2=10.60380$}\\
  &\multicolumn{1}{c}{bias}&\multicolumn{1}{c}{MSE}&\multicolumn{1}{c}{bias}&\multicolumn{1}{c}{MSE}&\multicolumn{1}{c}{bias}&\multicolumn{1}{c}{MSE}\\
  \hline
  MLE&$ 0.03610$&$0.07872$&$ 0.13328$&$0.91014$&$ 0.10132$&$0.44693$\\
  AUE&$-0.00058$&$0.07568$&$-0.02003$&$0.88393$&$-0.00263$&$0.43295$\\
  \hline\\
  
  &\multicolumn{2}{c}{$(\mu,\sigma)=(5,0.1)$}&\multicolumn{2}{c}{$(\mu,\sigma)=(0,5)$}&\multicolumn{2}{c}{$(\mu,\sigma)=(5,5)$}\\
  &\multicolumn{2}{c}{$t^2=61.85059$}&\multicolumn{2}{c}{$t^2=5.18058$}&\multicolumn{2}{c}{$t^2=10.69656$}\\
  &\multicolumn{1}{c}{bias}&\multicolumn{1}{c}{MSE}&\multicolumn{1}{c}{bias}&\multicolumn{1}{c}{MSE}&\multicolumn{1}{c}{bias}&\multicolumn{1}{c}{MSE}\\
  \hline
  MLE&$ 0.22886$&$2.57769$&$-0.01906$&$0.20499$&$ 0.03608$&$0.43819$\\
  AUE&$-0.05227$&$2.51639$&$ 0.00126$&$0.20603$&$-0.00145$&$0.42903$\\
  \hline
\end{tabular}
\end{table}

\section{Discussion}
\label{sect:disc}
We have discussed the asymptotic bias reduction of maximum
likelihood estimates of generic estimands with parameter
estimates obtained by maximizing suitable penalized likelihoods.
The penalty slightly denormalizes the model, but
no difficulty arises for explicit calculations.
We have answered the two questions raised in Introduction.

The first question concerns bias reduction of generic estimands.
We demonstrated that the problem of finding a penalty function
that gives an estimator with an asymptotic bias of $o(n^{-1})$
can be boiled down to the integration of a quasi-linear partial
differential equation \eqref{pde}.
Since the partial differential equation has a solution, we can
obtain the desired penalty function for generic model manifolds
and estimands. 

The second question concerns the system of partial differential
equations \eqref{firth_pen} obtained by Firth \cite{Fir93}.
For estimates of parameters, we show that our bias-reduction
method reproduces Firth's bias-reduction method.
We pointed out that the system is overdetermined, except for
one-dimensional models. The integration of an overdetermined
system requires an integrability condition. Flat model manifolds
are exceptions; we could assess the integrability of
the overdetermined system, and we obtained some explicit results.

Another natural question is asking what estimand is
asymptotically unbiased for a given penalty, because a penalty
function can be given a priori, for example, as a regularizer
or a prior. This question would be studied in the context of
integral geometry, because the partial differential equation
for an estimand function is a variant of the Laplace equation
and the integration relies on the group isometries of manifolds
(see Chapter II of \cite{Hel84}). We left this issue in future
research. 

Finally, we comment on the concept of $\alpha$-Laplacians
introduced in \eqref{al_Lap}. We introduced this concept
because it simplifies various arguments in this study.
This simplification is not restricted to bias reduction.
For a curved exponential family, Komaki \cite{Kom96} constructed
an optimal predictive distribution in terms of the Kullback-Leibler
loss by shifting the distribution in a direction orthogonal to
the model manifold with the amount
$\Delta^{(-1)}p(\cdot;\hat{\xi}_{\rm MLE})/2$
(the last equation on page 307 of \cite{Kom96}). This expression
implies that if the distribution function is $(-1)$-harmonic,
optimality has already been achieved. Further investigation of
$\alpha$-Laplacians might be interesting.

\begin{appendix}
  
  \section{Proofs}

  In this appendix, we collect proofs and concepts in differential
  geometry used in the proofs. For general background information
  on differential geometry, see \cite{KN63}. A concise summary of
  the terminology in affine differential geometry is in Chapter I
  of \cite{NS94}.
  
  \subsection{Sections~2.1-2.2}

  \begin{proof}[Proof of Proposition 2.1]  
    By the rule of covariant differentiation of tensor fields
    (see Section III.7 of \cite{KN63}), covariant derivative of
    the metric tensor is
    \[
    \nabla^{(\alpha)}_i g_{jk}
    =\partial_i g_{jk}
    -\sum_r{\Gamma^{(\alpha)}}^r_{ij}g_{rk}
    -\sum_t{\Gamma^{(\alpha)}}^t_{ik}g_{jt}.
    \]
    This expression is equal to
    \begin{align}    
    &\partial_i g_{jk}
    -\sum_{r,s}g^{rs}\Gamma^{(\alpha)}_{ij,s}g_{rk}
    -\sum_{t,u}g^{tu}\Gamma^{(\alpha)}_{ik,u}g_{jt}
    =\partial_i g_{jk}
    -\sum_s\delta^s_k\Gamma^{(\alpha)}_{ij,s}
    -\sum_u\delta^t_j\Gamma^{(\alpha)}_{ik,t}\nonumber\\
    &=\partial_i g_{jk}
    -\Gamma^{(\alpha)}_{ij,k}
    -\Gamma^{(\alpha)}_{ik,j}.\label{prop2.1-1}
    \end{align}
    Differentiation of $\mathbb{E}_\xi[u_k]=0$ leads to
    $g_{jk}=-\mathbb{E}_\xi[\partial_ju_k]=\mathbb{E}_\xi[u_ju_k]$,
    since
    \begin{align*}
      0&=\partial_j\int_{\mathcal{X}^n}u_k(\xi;x)e^{l(\xi;x)}dx
      =\int_{\mathcal{X}^n}
        \{\partial_ju_k(\xi;x)e^{l(\xi;x)}+
        u_j(\xi;x)u_k(\xi;x)e^{l(\xi;x)}\}dx\\
     &=\mathbb{E}_\xi[\partial_ju_k]+\mathbb{E}_\xi[u_ju_k].
    \end{align*}
    In the same manner, $g_{jk}=\mathbb{E}_\xi[u_ju_k]$ gives
    \[
    \partial_i g_{jk}
    =\mathbb{E}_\xi[(\partial_i\partial_j l)u_k]
    +\mathbb{E}_\xi[(\partial_i\partial_k l)u_j]
    +\mathbb{E}_\xi[u_iu_ju_k].
    \]
    Substituting this expression into \eqref{prop2.1-1} and
    using the definition of the $\alpha$-connection (5), we have
    $\nabla^{(\alpha)}_i g_{jk}=\alpha S_{ijk}$.
  \end{proof}

  See Appendix~\ref{appe:Lem2.2} for the proof of Lemma~2.2.

  \color{black}
  
\begin{proof}[Proof of Lemma~2.4]
By Lemma~2.2, the bias is evaluated as
\begin{align*}
  \mathbb{E}_\xi[f(\hat{\xi})-f(\xi)]=&
  \sum_i\mathbb{E}_\xi[(\hat{\xi}-\xi)^i]\partial_i f(\xi)+
  \sum_{i,j}\mathbb{E}_\xi[(\hat{\xi}-\xi)^i(\hat{\xi}-\xi)^j]
  \frac{1}{2}\partial_i\partial_jf(\xi)\\
  &+\sum_{i,j,k}\mathbb{E}_\xi[(\hat{\xi}-\xi)^i(\hat{\xi}-\xi)^j(\hat{\xi}-\xi)^k]
  \frac{1}{3!}\partial_i\partial_j\partial_kf(\tilde{\xi})\\
  =&\sum_{i,j}g^{ij}\left\{\left(\partial_j \tilde{l}
    -\frac{1}{2}\sum_{k,r}g^{kr}\Gamma^{(-1)}_{kr,j}\right)
    \partial_if+\frac{1}{2}\partial_i\partial_jf\right\}
  +o(n^{-1}),
\end{align*}
where $\tilde{\xi}$ is a point between $\hat{\xi}$ and $\xi$.
The assertion follows from the definitions of the $(-1)$-Laplacian
(9). The evaluation of the last term of the middle
expression is similar to that of $R_{1,i}$ in the proof of
Lemma~2.2.
\end{proof}
  
Recall the Lehmann-Scheff\'e theorem:

\begin{theorem}[Theorem 2.1.11, \cite{LC98}]\label{theo:UMVUE1}
  Let a sample be distributed according to a parametric model
  $M=\{p(\cdot;\xi):\xi\in\Xi\}$, and suppose that $t$ is
  a complete sufficient statistic for $M$.
  \begin{itemize}
  \item[$(i)$] For every $U$-estimable estimand $f(\xi)$,
    $\xi\in\Xi$, there exists a UMVUE.
  \item[$(ii)$] The UMVUE in $(i)$ is the unique unbiased estimator
    that is a function of $t$.
  \end{itemize}
\end{theorem}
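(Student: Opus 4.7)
The plan is to combine the Rao--Blackwell construction with completeness, in the standard two-step fashion. Since $f$ is $U$-estimable, pick any unbiased estimator $\delta(X)$ of $f(\xi)$ and define
\[
\delta^{*}(T) := \mathbb{E}_{\xi}[\delta(X)\mid T].
\]
The first thing to check is that $\delta^{*}$ is actually a statistic, i.e., that the right-hand side does not depend on $\xi$. This is exactly the content of $T$ being sufficient for $\mathcal{P}$: the conditional distribution of $X$ given $T$ is the same for every $\xi\in\Xi$. Next, the tower property gives $\mathbb{E}_{\xi}[\delta^{*}(T)] = \mathbb{E}_{\xi}[\delta(X)] = f(\xi)$, so $\delta^{*}$ is unbiased, and the law of total variance (equivalently, conditional Jensen) yields $\mathrm{var}_{\xi}\delta^{*}(T)\le \mathrm{var}_{\xi}\delta(X)$ pointwise in $\xi$.

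The second step is to promote this ``locally improved'' estimator to a UMVUE and to prove (ii) simultaneously, and this is where completeness enters. If $\delta_{1}^{*}(T)$ and $\delta_{2}^{*}(T)$ were both unbiased estimators of $f$ depending only on $T$, then $h(T) := \delta_{1}^{*}(T)-\delta_{2}^{*}(T)$ would satisfy $\mathbb{E}_{\xi}h(T)=0$ for all $\xi\in\Xi$. By completeness of $T$, $h(T)=0$ almost surely under every $q(\cdot;\xi)$, which establishes assertion (ii): the unbiased estimator depending only on $T$ is unique up to a.s. equivalence.

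To finish (i), let $\delta'$ be any other unbiased estimator of $f$; Rao--Blackwellizing it gives $\mathbb{E}[\delta'\mid T]$, which is again a function of $T$ and unbiased, hence by the uniqueness just established must coincide a.s.\ with $\delta^{*}$. Consequently $\mathrm{var}_{\xi}\delta'\ge \mathrm{var}_{\xi}\mathbb{E}[\delta'\mid T]=\mathrm{var}_{\xi}\delta^{*}$ for every $\xi$, so $\delta^{*}$ is a UMVUE.

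The only real subtlety is conceptual rather than computational: one must be careful to invoke sufficiency at precisely the point where we need the conditional expectation to be $\xi$-free, and completeness at precisely the point where we need to rule out the existence of distinct unbiased functions of $T$. Neither step requires any calculation beyond the tower property and the variance decomposition, and so there is no genuine technical obstacle; the proof is entirely structural.
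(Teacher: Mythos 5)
Your proof is correct and is the canonical Rao--Blackwell-plus-completeness argument; the paper itself states this result without proof, simply citing Theorem 2.1.11 of Lehmann and Casella, and your argument coincides with the standard one given there. The only (routine) caveat is that the variance comparison should be restricted to unbiased estimators with finite variance, and all identities hold up to a.s.\ equivalence, but these do not affect the substance.
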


\begin{proof}[Proof of Theorem~2.6]
  By the factorization criterion (Theorem 1.6.5 of \cite{LC98}),
  if we have a statistic $t$ to be sufficient for a family of
  probability densities $M$ of a sample $x$, there exist
  non-negative functions $h_\xi$ and $k$ such that the density
  of $p(\cdot;\xi)$ satisfy $p(x;\xi)=h_\xi(t(x))k(x)$. Since
  $k$ does not depend on $\xi$, the maximum likelihood estimator
  is a function of $t$. On the other hand, since the penalty
  $\tilde{l}$ does not depend on $x$, the maximizer of
  the penalized likelihood $e^{\tilde{l}(\xi)}h_\xi(t(x))$
  modulo constant, is also a function of $t$. Let the maximizer
  be denoted by $\hat{\xi}(t)$. According to
  Theorem~\ref{theo:UMVUE1}, the UMVUE of $f(\xi)$ exists
  uniquely and is a function of $t$. Let the UMVUE be denoted
  by $\delta(t)$. Now,
  $\mathbb{E}_\xi[f(\hat{\xi}(t))-\delta(t)]=\mathbb{E}_\xi f(\hat{\xi}(t))-f(\xi)=o(n^{-1})$,
  otherwise $f(\hat{\xi}(t))$ should have bias of $O(n^{-1})$,
  which contradicts to Lemma~2.4.
\end{proof}  

\subsection{Proof of Lemma~2.2}\label{appe:Lem2.2}

Consider a log-likelihood function
$l(\xi;x_1,...,x_n):=\sum_{i=1}^n \log p(x_i;\xi)$
of a sample $(x_1,...,x_n)\in\mathcal{X}^n$ with a parametric model
$p(x;\xi)$, $\xi\in\Xi$. The parameter space $\Xi$ is an open
subset of $\mathbb{R}^d$ for a fixed $d\in\mathbb{Z}_{>0}$ for large $n$.
The true value of the parameter, $\xi_0$, is assumed to be in the
interior of $\Xi$. Expectations are taken with respect to the product
probability measure $P_{\xi_0}(dx)=e^{l(\xi_0;x)}\prod_{i=1}^ndx_i$
and a derivative is denoted by $\partial_i:=\partial/\partial\xi_i$.

We prepare regularity conditions:

\begin{itemize}

\item[A1:] The map $\mathcal{X}\ni x\mapsto l(\xi;x)$ is measurable for
  each $\xi\in\Xi$;

\item[A2:] The map $\Xi\ni \xi\mapsto l(\xi;x)$ is three times differentiable
  for each $\xi\in\Xi$;

\item[A3:] $\partial_il(\xi_0;x)$, $i\in\{1,\ldots,d\}$
  are square integrable with respect to $P_{\xi_0}(dx)$;
  
\item[A4:] Let $G=(g_{ij}(\xi_0))$,
  $g_{ij}(\xi_0):=\mathbb{E}[\sum_{k=1}^n\partial_i \log p(x_k;\xi_0)\partial_j \log p(x_k;\xi_0)]$.
  The largest eigenvalue of $-C^{-1}GC^{-1}$, $\lambda_{\text{max}}$,
  for a matrix
  $C=(c_{ij})=\text{diag}(c_1,...,c_d)$ with $c_i>0$ such that
  $c_*:=\min_i c_i\to\infty$ as $n\to\infty$ satisfies
  ${\lim\sup}_{n\to\infty}\lambda_{\text{max}}\in(-\infty,0)$;
  
\item[A5:] For an $r\in\mathbb{Z}_{>0}$, the $r$-th moments of
  the following are bounded:
  \[
  \frac{1}{c_i}|\partial_i l(\xi_0;x)|, \quad
  \frac{1}{\sqrt{c_ic_j}}|\partial_i\partial_j l(\xi_0;x)+g_{ij}(\xi_0)|, \quad
  \frac{c_*}{c_ic_jc_k}M_{ijk}(\xi_0),
  \]
  where
  $M_{ijk}(\xi_0):=\sup_{\tilde{\xi}\in B_\delta(\xi_0)}|\partial_i\partial_j\partial_k l(\tilde{\xi};x)|$
  with a ball
  \[
  B_\delta(\xi_0):=\{\tilde{\xi}:|\tilde{\xi}-\xi_0|\le \delta c_*/c_i,i\in\{1,...,d\}\}.\]
  
\end{itemize}

Under conditions A1-A5, Das et al. \cite{DJR04} proved, as their Theorem 2.1,
the following theorem for an asymptotic representation of
$\hat{\xi}-\xi_0$ to study the mean squared error of the empirical
predictor of general linear mixed-effects models, where
$\hat{\xi}$ is the solution of the system of score equations
$u_i(\xi;x)=0$, $i\in\{1,\ldots,d\}$.

\begin{theorem}[\cite{DJR04}]\label{theo:DJR}
  Under the regularity conditions A1-A5,
  \begin{itemize}    
  \item[$(i)$] A $\hat{\xi}\in\Xi$ exists such that for any $\rho\in(0,1)$
    there is a set of events $\mathcal{E}$ satisfying for large $n$ and on
    $\mathcal{E}$,
    $\partial_il(\hat{\xi};x)=0$, $|c_{ij}(\hat{\xi}-\xi_0)^j|<c_*^{1-\rho}$,
    and
    \[
      \hat{\xi}^i=\xi_0^i+\sum_jg^{ij}(\xi_0)\partial_jl(\xi_0;x)+R,
      \quad |R|\le c_*^{-2\rho}u_*, \quad i\in\{1,\ldots,d\} 
    \]
    with $\mathbb{E}u_*^r$ bounded;
  \item[$(ii)$] $\mathbb{P}(\mathcal{E}^c)\le c_0c_*^{-\tau r}$ for some constant $c_0$
    and $\tau:=(1/4)\wedge (1-\rho)$.  
  \end{itemize}
\end{theorem}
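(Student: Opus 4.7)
The plan is to treat the likelihood equation $U_i(\xi) := \partial_i l(\xi;x) = 0$ as a perturbation of a deterministic linear system whose coefficient matrix is the Fisher information $G(\xi_0) = (g_{ij}(\xi_0))$. A second-order Taylor expansion of $U$ around $\xi_0$ gives
\[
U_i(\xi) = U_i(\xi_0) - g_{ij}(\xi_0)(\xi-\xi_0)^j + \sqrt{c_ic_j}\,W_{ij}(\xi-\xi_0)^j + \tfrac{1}{2}\partial_j\partial_k U_i(\tilde{\xi})(\xi-\xi_0)^j(\xi-\xi_0)^k,
\]
where $W_{ij} := \{\partial_i\partial_jl(\xi_0;x)+g_{ij}(\xi_0)\}/\sqrt{c_ic_j}$ measures the Hessian's deviation from its expectation $-g_{ij}(\xi_0)$. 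Condition A4 forces $G(\xi_0)\succeq \lambda_0 C^2$ for some $\lambda_0>0$ and large $n$, so $\|G(\xi_0)^{-1}\| = O(c_*^{-2})$; existence and the claimed expansion will follow from a fixed-point argument for $U(\xi)=0$ once the perturbative terms are controlled on a suitable high-probability event.

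Introduce the scaled random variables $V_i := U_i(\xi_0)/c_i$, $W_{ij}$ as above, and $Z_{ijk} := c_*M_{ijk}(\xi_0)/(c_ic_jc_k)$. By hypothesis A5, each has $r$-th moment bounded uniformly in $n$. Setting $\tau := (1/4)\wedge(1-\rho)$, define the good event
\[
\mathcal{E} := \{\max_i|V_i|\le c_*^{\tau}\}\cap\{\max_{i,j}|W_{ij}|\le c_*^{\tau}\}\cap\{\max_{i,j,k}Z_{ijk}\le c_*^{\tau}\}.
\]
Markov's inequality applied to each of the three moment bounds, followed by a union bound, yields $\mathbb{P}(\mathcal{E}^c)\le c_0 c_*^{-\tau r}$ for a constant $c_0$ depending only on the uniform moment bounds and on $d$, which is assertion (ii).

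For existence, consider the ball $B := \{\xi : c_i|(\xi-\xi_0)^i|\le c_*^{1-\rho}\ \forall i\}$ and the map $\Phi(\xi) := \xi + G(\xi_0)^{-1}U(\xi)$, whose fixed points are precisely the roots of $U$. On $\mathcal{E}$, inserting the Taylor identity and using $\|G(\xi_0)^{-1}\|=O(c_*^{-2})$ together with the defining bounds of $\mathcal{E}$ shows that the three contributions to $c_i|(\Phi(\xi)-\xi_0)^i|$ are of orders $c_*^{\tau}$, $c_*^{\tau-\rho}$, and $c_*^{\tau-2\rho}$ respectively; since $\tau\le 1-\rho$, their sum is strictly less than $c_*^{1-\rho}$ for $n$ large, so $\Phi(B)\subseteq B$. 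Continuity of $\Phi$ and Brouwer's fixed-point theorem then supply $\hat{\xi}\in B$ with $U(\hat{\xi})=0$ and the stated ball bound. Substituting $\xi=\hat{\xi}$ back into the Taylor identity and solving for $\hat{\xi}-\xi_0$ gives
\[
\hat{\xi} = \xi_0 + G(\xi_0)^{-1}U(\xi_0) + R, \qquad R := G(\xi_0)^{-1}\bigl\{\sqrt{c_ic_j}\,W_{ij}(\hat{\xi}-\xi_0)^j + \tfrac{1}{2}\partial_j\partial_k U_i(\tilde{\xi})(\hat{\xi}-\xi_0)^j(\hat{\xi}-\xi_0)^k\bigr\}.
\]
Using $|(\hat{\xi}-\xi_0)^i|\le c_*^{-\rho}$ and the bounds on $W$ and $Z$ on $\mathcal{E}$, each component of $R$ is at most $c_*^{-2\rho}$ times a polynomial $u_*$ in the $V$, $W$, $Z$ variables; Hölder's inequality and A5 give $\mathbb{E}(u_*^r)<\infty$, proving (i).

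The main obstacle is the simultaneous calibration of $\tau$: it must be small enough that each of the three perturbative contributions to $\Phi(\xi)-\xi_0$ fits strictly inside $B$ and that the remainder $R$ actually attains the order $c_*^{-2\rho}$, yet large enough that $\mathbb{P}(\mathcal{E}^c)=O(c_*^{-\tau r})$ gives useful polynomial decay. The value $\tau=(1/4)\wedge(1-\rho)$ is exactly the compromise that makes all of these estimates consistent, with the $1/4$ dictated by the Hölder exponents needed to combine $V$, $W$, and $Z$ into a single variable $u_*$ with bounded $r$-th moment. Carrying out the scale bookkeeping uniformly over $i,j,k$ and verifying that all implicit constants are independent of $n$ (so that the expansion is genuinely asymptotic) is the delicate part of the proof.
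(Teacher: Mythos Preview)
The paper does not contain a proof of this theorem: it is quoted verbatim as Theorem~2.1 of Das, Jiang, and Rao~\cite{DJR04} and is used only as a black box in the proof of Lemma~\ref{lemm:exps}. There is therefore no ``paper's own proof'' to compare against.

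That said, your sketch is a reasonable reconstruction of the argument in~\cite{DJR04}: the Taylor expansion of the score, the definition of the good event $\mathcal{E}$ via simultaneous moment control of the standardized score, Hessian fluctuation, and third-derivative supremum, the Markov/union bound for $\mathbb{P}(\mathcal{E}^c)$, and a self-map argument on a shrinking ball are all the ingredients used there. Two points deserve more care if you want an honest proof. First, when $\rho\ge 3/4$ you have $\tau=1-\rho$, so the leading contribution to $c_i|(\Phi(\xi)-\xi_0)^i|$ is of the \emph{same} order $c_*^{1-\rho}$ as the radius of $B$; getting $\Phi(B)\subseteq B$ then requires tracking the constants, not just the exponents, and typically forces one to define $\mathcal{E}$ with thresholds $\epsilon c_*^\tau$ for a suitably small $\epsilon$. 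Second, the claim that the exponent $1/4$ is ``dictated by the H\"older exponents needed to combine $V,W,Z$ into a single $u_*$'' is not quite right: $u_*$ is a fixed polynomial in finitely many variables each with bounded $r$-th moment, so $\mathbb{E}(u_*^r)<\infty$ follows from H\"older regardless of $\tau$. The $1/4$ actually arises from balancing the \emph{size} of the remainder (you need $\tau\le\rho$ so that the $W$-term in $R$ is $O(c_*^{-2\rho})$, and you need $2\tau\le\rho$-type bounds for the cubic term) against the decay rate of $\mathbb{P}(\mathcal{E}^c)$.
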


This theorem states that the solution of the system of score
equations $u_i(\xi;x)=0$ exists, and lies in the parameter space $\Xi$
with probability tending to one.

Consider the penalized log-likelihood of a sample
$(x_1,...,x_n)\in\mathcal{X}^n$ with a penalty function
$\tilde{l}(\xi)=O(1)$:
\[
  l^*(\xi;x_1,...,x_n)
  :=l(\xi;x_1,...,x_n)+\tilde{l}(\xi), \quad \xi\in\Xi.
\]
The maximizer of $l^*(\xi;x_1,...,x_n)$ is denoted by $\hat{\xi}_n$.
Expectations are still taken with respect to the product probability
measure, $P_{\xi_0}(dx)=e^{l(\xi_0;x)}\prod_{i=1}^ndx_i$, and we have
\[
\mathbb{E}[\sum_{k=1}^n\partial_i l^*(\xi_0;x_k)\partial_jl^*(\xi_0;x_k)]=g_{ij}(\xi_0)+O(1).
\]
Theorem~\ref{theo:DJR} is used in the following proof of
Lemma~2.2.

We prepare the regularity conditions with some modifications.
Das et al. \cite{DJR04} needed conditions A4 and A5 to treat multiple
asymptotics.
In contrast, a single asymptotic is sufficient for our purpose.
Therefore, we set $c_1=\cdots =c_n=c_*=\sqrt{n}$, and the resulting
conditions are conditions B4 and B5.

\begin{itemize}

\item[B1:] The map $\mathcal{X}\ni x\mapsto l^*(\xi;x)$ is measurable for
  each $\xi\in\Xi$;

\item[B2:] The map $\Xi\ni \xi\mapsto l^*(\xi;x)$ is four times differentiable
  for each $\xi\in\Xi$;

\item[B3:] $\partial_il(\xi_0;x)$, $i\in\{1,\ldots,d\}$ are square integrable
  with respect to $P_{\xi_0}(dx)$;

\item[B4:] The smallest eigenvalue of $G/n$, $\lambda_{\rm min}$,
  satisfies $\lim\inf_{n\to\infty}\lambda_{\rm min}\in(0,\infty)$.
  
\item[B5:] For an $r\in\mathbb{Z}_{\ge 9}$, the $r$-th moments of the following
  are bounded:
  \begin{align*}
  &\frac{1}{\sqrt{n}}|\partial_i l^*(\xi_0;x)|, \quad
   \frac{1}{n}|\partial_i\partial_j\partial_k\partial_r l^*(\xi_0;x)|, \quad
   \frac{1}{\sqrt{n}}|\partial_i\partial_j l^*(\xi_0;x)+g_{ij}(\xi_0)|, \\
   &\frac{1}{n}|\partial_i\partial_j\partial_k l^*(\xi_0;x)|\quad
    \frac{1}{n}|\partial_i\partial_j\partial_k l^*(\xi_0;x)
    -\mathbb{E}[\partial_i\partial_j\partial_k l^*(\xi_0;x)]|.
  \end{align*}

\item[B6:]
  $\max_{i\in\{1,...,d\}}|\hat{\xi}_n^i|<d_0n^s$ for some constant
  $d_0$ and $0<s<r/16-1/2$.
  
\end{itemize}

\begin{proof}[Proof of Lemma~2.2]
  With setting $\rho\in(2/3,3/4)$, Theorem~\ref{theo:DJR} concludes that
  \begin{itemize}
  \item[(a)] A $\hat{\xi}_n\in\Xi$ exists such that there is a set of events
  $\mathcal{E}$ satisfying for large $n$ and on
  $\mathcal{E}$, $\partial_il^*(\hat{\xi}_n;x)=0$,
  $|(\hat{\xi}_n-\xi_0)^j|<n^{-\rho/2}$ and
  \begin{equation}
    \hat{\xi}_n^i=\xi_0^i+\sum_jg^{ij}(\xi_0)\partial_jl^*(\xi_0;x)+R^i,
    \quad |R^i|\le n^{-\rho}u_*, \quad i\in\{1,\ldots,d\} \label{B1}
  \end{equation}
  with $\mathbb{E}(u_*^r)$ bounded;
  \item[(b)]
    $\mathbb{P}(\mathcal{E}^c)\le c_0n^{-r/8}$ for some constant $c_0$.
   \end{itemize} 
  For simplicity of expressions, in the following expressions,
  $g_{ij}(\xi_0)$ will be denoted by $g_{ij}$.
  In addition, $\mathbb{E}^\mathcal{E}(\cdot)$ and
  $\mathbb{E}^{\mathcal{E}^c}(\cdot)$ will denote
  $\mathbb{E}(\cdot1_{\mathcal{E}})$ and 
  $\mathbb{E}(\cdot1_{\mathcal{E}^c})$, respectively. By Taylor's theorem,
  \begin{align*}
  \partial_i l^*(\hat{\xi_n};x)-\partial_i l^*(\xi_0;x)
  =&\sum_j(\hat{\xi_n}-\xi_0)^j\partial_i\partial_j l^*(\xi_0;x)\nonumber\\
  &+\frac{1}{2}\sum_{j,k}(\hat{\xi_n}-\xi_0)^j(\hat{\xi_n}-\xi_0)^k
  \partial_i\partial_j\partial_kl^*(\xi_0;x)+R_{1,i}\nonumber\\
  =&-\sum_j(\hat{\xi_n}-\xi_0)^j g_{ij}+\sum_j(\hat{\xi_n}-\xi_0)^j
  \{\partial_i\partial_jl^*(\xi_0;x)+g_{ij}\}\nonumber\\
  &+\frac{1}{2}\sum_{j,k}(\hat{\xi_n}-\xi_0)^j(\hat{\xi_n}-\xi_0)^k
  \partial_i\partial_j\partial_kl^*(\xi_0;x)+R_{1,i},
  \end{align*}
  where
  \[
  R_{1,i}=\frac{1}{3!}\sum_{j,k,r}
  (\hat{\xi_n}-\xi_0)^j(\hat{\xi_n}-\xi_0)^k(\hat{\xi_n}-\xi_0)^r
  \partial_i\partial_j\partial_k\partial_rl^*(\tilde{\xi};x) \quad
  \]
  for a point $\tilde{\xi}$ between $\hat{\xi_n}$ and $\xi_0$.
  Since $\partial_jl^*(\hat{\xi}_n;x)=0$, we have
  \begin{align}
  (\hat{\xi_n}-\xi_0&)^i=\sum_jg^{ij}\left\{
  \partial_j l^*(\xi_0;x)+
  \sum_k(\hat{\xi_n}-\xi_0)^k(\partial_j\partial_kl^*(\xi_0;x)+g_{jk})
  \right.\nonumber\\
  &\left.+\frac{1}{2}\sum_{k,r}(\hat{\xi_n}-\xi_0)^k(\hat{\xi_n}-\xi_0)^r
  \partial_j\partial_k\partial_rl^*(\xi_0;x)+R_1\right\}
  =\sum_jg^{ij}\partial_jl^*(\xi_0;x)+R_2^i. \label{B3}
  \end{align}
  Here, $g^{ij}=O(n^{-1})$ holds by condition B4 and \eqref{B1}
  gives $|R_2^i|\le n^{-\rho}u_*$ with $\mathbb{E}^\mathcal{E}(u_*)$ bounded.
  Then, replacing every occurrence of $(\hat{\xi}_n-\xi_0)^i$ in the middle
  expression of \eqref{B3} by $\sum_jg^{ij}\partial_jl^*(\xi_0;x)+R_2^i$
  and taking the expectation of the first equality on $\mathcal{E}$,
  we have
  \begin{align*}
  \mathbb{E}^\mathcal{E}(\hat{\xi_n}-\xi_0)^i
      =&\sum_jg^{ij}\mathbb{E}^\mathcal{E}[\partial_jl^*(\xi_0;x)]
      +\sum_jg^{ij}\mathbb{E}^\mathcal{E}(R_{1,j})\\
  &+\sum_{j,k}g^{ij}
    \mathbb{E}^\mathcal{E}[(\sum_rg^{kr}\partial_rl^*(\xi_0;x)+R_2^k)
    (\partial_j\partial_kl^*(\xi_0;x)+g_{jk})]\\
  &+\sum_{j,k,r,s,t} g^{ij}
    \frac{1}{2}\mathbb{E}^\mathcal{E}
    [(g^{ks}\partial_sl^*(\xi_0;x)+R_2^k)
      (g^{rt}\partial_tl^*(\xi_0;x)+R_2^r)
      \partial_j\partial_k\partial_rl^*(\xi_0;x)].
  \end{align*}
  The Cauchy-Schwarz inequality, condition B5, and $|R_2^k|\le n^{-\rho}u_*$
  lead to
  \begin{align*}
    \sum_{j,k}g^{ij}
    |\mathbb{E}^\mathcal{E}[R_2^k(\partial_j\partial_kl^*(\xi_0;x)+g_{jk})]|
  &\le\sum_{j,k}g^{ij}(\mathbb{E}^\mathcal{E}(R_2^k)^2)^{1/2} 
  \{\mathbb{E}^\mathcal{E}(\partial_j\partial_kl^*(\xi_0;x)+g_{jk})^2\}^{1/2}\\
  &\le\{\mathbb{E}^\mathcal{E}(u_*^2)\}^{1/2}o(n^{-1}),
  \end{align*}
  In the same way, we observe that
  \[
  \sum_{j,k,r,s}g^{ij}g^{ks}
  \mathbb{E}^\mathcal{E}[\partial_sl^*(\xi_0;x)R_2^r
  \partial_j\partial_k\partial_rl^*(\xi_0;x)],\quad
  \sum_{j,k,r}g^{ij}\mathbb{E}^\mathcal{E}[R_2^rR_2^k\partial_j\partial_k\partial_rl^*(\xi_0;x)]
  \]
  are $o(n^{-1})$. Condition B5 and $|(\hat{\xi}-\xi_0)^i|<n^{-\rho/2}$
  lead to
  \[
  \sum_jg^{ij}|\mathbb{E}^\mathcal{E}(R_{1,j})|<
  \sum_{j,k,r,s}g^{ij}\frac{1}{3!}n^{-3\rho/2}
  |\mathbb{E}^\mathcal{E}[\partial_j\partial_k\partial_r\partial_sl^*(\tilde{\xi};x)]|=o(n^{-1}),
  \]
  where $\rho>2/3$ is demanded. Therefore, we have
  \begin{align*}
  \mathbb{E}^\mathcal{E}(\hat{\xi_n}&-\xi_0)^i
  =2\sum_jg^{ij}\mathbb{E}^\mathcal{E}[\partial_jl^*(\xi_0;x)]
   +\sum_{j,k,r}g^{ij}g^{kr}\mathbb{E}^\mathcal{E}[\partial_rl^*(\xi_0;x)
    \partial_j\partial_kl^*(\xi_0;x)]\\
   &+\sum_{j,k,r,s,t}g^{ij}g^{ks}g^{rt}\frac{1}{2}
   \mathbb{E}^\mathcal{E}[\partial_sl^*(\xi_0;x)\partial_tl^*(\xi_0;x)
   \partial_j\partial_k\partial_rl^*(\xi_0;x)]+o(n^{-1})\\
   =&2\sum_jg^{ij}\mathbb{E}^\mathcal{E}[\partial_jl^*(\xi_0;x)]
   +\sum_{j,k,r}g^{ij}g^{kr}\mathbb{E}^\mathcal{E}[\partial_rl^*(\xi_0;x)
    \partial_j\partial_kl^*(\xi_0;x)]\\
   &+\sum_{j,k,r}g^{ij}g^{kr}\frac{1}{2}
    \mathbb{E}^\mathcal{E}[\partial_j\partial_k\partial_rl^*(\xi_0;x)]+o(n^{-1})\\
   =&2\sum_jg^{ij}\mathbb{E}^\mathcal{E}[\partial_jl^*(\xi_0;x)]
   +\sum_{j,k,r}g^{ij}g^{kr}\mathbb{E}^\mathcal{E}[\{\partial_r l(\xi_0;x)+\partial_r \tilde{l}(\xi_0)\}
    \{\partial_j\partial_k(l(\xi_0;x)+\tilde{l}(\xi_0))\}]\\
   &+\sum_{j,k,r}g^{ij}g^{kr}\frac{1}{2}
   \mathbb{E}^\mathcal{E}[\partial_j\partial_k\partial_rl^*(\xi_0;x)]+o(n^{-1}),
   \end{align*}
  where the second equality holds from the fact which follows from
  condition B5:
   \[
   cov^\mathcal{E}(\partial_sl^*(\xi_0;x)\partial_tl^*(\xi_0;x),
   \partial_j\partial_k\partial_rl^*(\xi_0;x))=o(n^2).
   \]  
   Since $\mathbb{E}^\mathcal{E}(\cdot)\le \mathbb{E}(\cdot)$, the last
   expression is bounded from the above by
   \begin{align}
   &2\sum_jg^{ij}\mathbb{E}[\partial_jl^*(\xi_0;x)]
   +\sum_{j,k,r}g^{ij}g^{kr}\mathbb{E}[\{\partial_r l(\xi_0;x)+\partial_r \tilde{l}(\xi_0)\}
    \{\partial_j\partial_k(l(\xi_0;x)+\tilde{l}(\xi_0))\}]\nonumber\\
   &+\sum_{j,k,r}g^{ij}g^{kr}\frac{1}{2}
   \mathbb{E}[\partial_j\partial_k\partial_rl^*(\xi_0;x)]+o(n^{-1}). \label{B4}
   \end{align}
   On the other hand, by (b) and condition B6, we observe
   \[
   \mathbb{E}^{\mathcal{E}^c}(\hat{\xi}_n-\xi_0)^i
   \le d_0 n^s \mathbb{P}(\mathcal{E}^c)\le c_0d_0 n^{s-r/8}=o(n^{-1}).
   \]
   and thus
   $\mathbb{E}(\hat{\xi}_n-\xi_0)^i=\mathbb{E}^{\mathcal{E}}(\hat{\xi}_n-\xi_0)^i+o(n^{-1})$.
   Therefore, \eqref{B4} gives
   \begin{align*}
     &\mathbb{E}(\hat{\xi}_n-\xi_0)^i=
     2\sum_jg^{ij}\partial_j\tilde{l}(\xi_0)\\
     &+\sum_{j,k,r}g^{ij}g^{kr}  
     \left\{
     \mathbb{E}[\partial_rl(\xi_0;x)\partial_j\partial_kl(\xi_0;x)]+
     \partial_r\tilde{l}(\xi_0)\mathbb{E}[\partial_j\partial_k l(\xi_0;x)]
     +\frac{1}{2}\mathbb{E}[\partial_j\partial_k\partial_rl(\xi_0;x)]\right\}\\
    &+o(n^{-1})\\
   &=
   \sum_jg^{ij}\partial_j \tilde{l}(\xi_0)
   +\sum_{j,k,r}g^{ij}g^{kr}\left\{\mathbb{E}[\partial_r l(\xi_0;x)
   \partial_j\partial_kl(\xi_0;x)]
   +\frac{1}{2}
   \mathbb{E}[\partial_j\partial_k\partial_rl(\xi_0;x)]\right\}
   +o(n^{-1})\\
   &=\sum_jg^{ij}\left\{\partial_j \tilde{l}(\xi_0)
   +{\frac{1}{2}}\sum_{k,r}g^{kr}(\Gamma^{(1)}_{jk,r}-\partial_rg_{jk})\right\}
   +o(n^{-1})
   \end{align*}
   for large $n$, where the second equality follows by
   $\mathbb{E}[\partial_j\partial_kl(\xi_0;x)]=-g_{jk}$.
   The last equality follows by
   $\mathbb{E}[\partial_j\partial_k\partial_rl(\xi_0;x)]=-\partial_rg_{jk}-\Gamma^{(1)}_{jk,r}$, which can be obtained by differentiating
   $g_{jk}=-\mathbb{E}[\partial_j\partial_kl(\xi_0;x)]$ and the
   definition of the $\alpha$-connection (5).
   \color{black}
   Hence, assertion (i) is established.
   For assertion (ii), by using \eqref{B1}, we have
   \[
    \mathbb{E}[(\hat{\xi_n}-\xi_0)^i(\hat{\xi_n}-\xi_0)^j]=
    \sum_{k,r}g^{ik}g^{jr}\mathbb{E}[\partial_kl^*(\xi;x)\partial_rl^*(\xi;x)]+o(n^{-1})=g^{ij}+o(n^{-1})
  \]
  in a similar way as in the proof of assertion (i).
\end{proof}

\subsection{Section~2.4: Geodesic distances and normal coordinates}

In a local coordinate system $\{\xi^1,\cdots,\xi^d\}$,
consider a curve $\gamma=\xi(t)$, $a<t<b$, where
$-\infty\le a<b\le \infty$, of class $C^1$ in a manifold $M$.
The {\it length} of $\gamma$ is defined as
$\int_a^b\sqrt{\sum_{i,j}g_{ij}\dot{\xi}^i\dot{\xi}^j}dt$,
where $\dot{\xi}(t)=d\xi/dt$ denotes the vector
  tangent to $\gamma$ at $\xi(t)$.
A curve $\gamma$ is called a {\it geodesic} if the vector field
$X=\dot{\xi}(t)$ defined along $\gamma$ is parallel along $\gamma$,
that is, if $\nabla^{(0)}_XX$ exists and equals 0 for all $t$.
In a local coordinate system, the geodesic equation
is expressed as
\begin{equation}\label{geodesic}
  \ddot{\xi}^i+\sum_{j,k}\Gamma^i_{jk}\dot{\xi}^j\dot{\xi}^k=0,
  \quad i\in\{1,\ldots,d\},
\end{equation}
where $\ddot{\xi}=d^2\xi/dt^2$. The parameter $t$ is normalized
such that we have
\begin{equation}\label{can}
\sum_{i,j}g_{ij}\dot{\xi}^i\dot{\xi}^j=1
\end{equation}
and called the {\it canonical parameter} of the geodesic $\gamma$.
The canonical parameter of a geodesic should not be confused
with that of an exponential family. The {\it distance}
${\rm dis}(\zeta,\xi)$ on $M$ is the infimum length of all
piecewise differentiable curves of class $C^1$ joining $\zeta$ and
$\xi$ in $M$. Let the neighborhood of $\zeta$ in $M$ be $U_\zeta$.
With the 0-connection, it can be shown that every point
$\xi\in U_\zeta$ can be joined to $\zeta$ by the unique geodesic
lying in $U_\zeta$, and the length is equal to ${\rm dis}(\zeta,\xi)$
(Proposition IV.3.4 of \cite{KN63}). In this sense,
we call this distance the {\it geodesic distance}. In an Euclidean
manifold, the geodesic is the straight line joining $\zeta$ and
$\xi$.

An orthonormal frame at $\zeta\in M$ defines a coordinate
system in the tangent space $T_\zeta M$. The diffeomorphism
of the neighborhood of $\zeta$ in $T_\zeta M$ to the neighborhood
of $\zeta$ in $M$, $U_\zeta$, defines a local coordinate system
in $U_\zeta$. We denote the local coordinate system by
$\{\zeta^1,\ldots,\zeta^d\}$ and call it
a {\it normal coordinate system} at $\zeta$. Note that
$\{\partial/\partial \zeta^1,\ldots,\partial/\partial \zeta^d\}$
forms an orthonormal frame at $\zeta$, but may not be
orthonormal at other points. If $\{\zeta^1,\ldots,\zeta^d\}$ is
a normal coordinate system at $\zeta$, then the geodesic
$\gamma=\zeta(t)$ with the initial condition $\zeta(0)=0$ and
$\dot{\zeta}(0)=c$ is expressed as $\zeta^i=c^it$,
$i\in\{1,\ldots,d\}$ (see Proposition III.8.3 of \cite{KN63}).

Formulas (i) and (ii) in the following lemma appear as equations
(55) and (57), respectively, in Chapitre VII of \cite{Rie49}.
Proofs are given for readers' convenience, because the proofs were
not explicitly given in \cite{Rie49}.

For a geodesic $\gamma=\xi(t)$,
the length minimizing property of geodesics gives the following
useful relation
\begin{equation}\label{riesz52}
  \frac{\partial t}{\partial\xi^i}=\sum_jg_{ij}\dot{\xi}^j,
\end{equation}
which is derived as equation (52) in Chapitre VII of \cite{Rie49}.

\begin{lemma}\label{lemm:riesz}
  For a function $\varphi(t^2)$, where $t={\rm dis}(\zeta,\xi)$ is
  the geodesic distance, we have
  \begin{itemize}

  \item[(i)] $\Delta^{(0)}\varphi(t^2)=\varphi'(t^2)\Delta^{(0)}(t^2)+4t^2\varphi''(t^2)$, and

  \item[(ii)]
    $\displaystyle\Delta^{(0)}(t^2)=2d+t\frac{d}{dt}\{\log h(\zeta)\}$,
    where $\zeta=\dot{\xi}(0)t$ is the normal coordinate system
    at $\zeta$, and $h(\zeta)$ is the determinant of
    the Fisher metric tensor.
  \end{itemize}
  
\end{lemma}

\begin{proof}
  For assertion (i), by the definition of the Laplace--Beltrami operator (11), we have
\begin{align*}
  \Delta^{(0)}\varphi(t^2)&=
  \frac{1}{\sqrt{g}}\sum_i\frac{\partial}{\partial\xi^i}
  \left(\sqrt{g}\sum_jg^{ij}\frac{\partial \varphi(t^2)}{\partial\xi^j}\right)
  =\frac{1}{\sqrt{g}}\sum_i\frac{\partial}{\partial \xi^i}
  \left(\sqrt{g}\sum_jg^{ij}\frac{\partial t^2}{\partial\xi^j}\varphi'(t^2)\right)\\
  &=\frac{1}{\sqrt{g}}\sum_i\frac{\partial}{\partial\xi^i}
  \left(\sqrt{g}\sum_jg^{ij}\frac{\partial t^2}{\partial\xi^j}\right)
  \varphi'(t^2)
  +\sum_{i,j}g^{ij}\frac{\partial t^2}{\partial\xi^i}\frac{\partial t^2}
  {\partial\xi^j}\varphi''(t^2)\\
  &=\varphi'(t^2)\Delta^{(0)}(t^2)+4\sum_{i,j}g_{ij}\dot{\xi}^i\dot{\xi}^j
  t^2\varphi''(t^2)=\varphi'(t^2)\Delta^{(0)}(t^2)+4t^2\varphi''(t^2),
\end{align*}
where the second last equality follows by \eqref{riesz52} and
  the last equality follows by \eqref{can}.
For assertion (ii), we work with the normal coordinate system
 at $\zeta$. We have
\begin{align*}
  \Delta^{(0)}(t^2)&=
  \frac{1}{\sqrt{h}}\sum_i\frac{\partial}{\partial \zeta^i}
  \left(\sqrt{h}\sum_jh^{ij}\frac{\partial t^2}{\partial\zeta^j}\right)
  =\frac{1}{\sqrt{h}}\sum_i\frac{\partial \sqrt{h}}{\partial\zeta^i}
  \sum_jh^{ij}\frac{\partial t^2}{\partial\zeta^j}+
  \sum_{i,j}\frac{\partial}{\partial \zeta^i}
  h^{ij}\frac{\partial t^2}{\partial\zeta^j}\\
  &=t\frac{d}{dt}\{\log h(\zeta)\}\sum_{i,j}h_{ij}\dot{\zeta}^i\dot{\zeta}^j
  +2\sum_i\frac{\partial}{\partial\zeta^i}(t\dot{\zeta}^i)
  =t\frac{d}{dt}\{\log h(\zeta)\}
  +2\sum_i\frac{\partial}{\partial\zeta^i}(t\dot{\zeta}^i),
\end{align*}
where the second last equality follows by \eqref{riesz52}.
Because \eqref{can} and
\[
\sum_i\frac{\partial}{\partial\zeta^i}(t\dot{\zeta}^i)
        =\sum_i\frac{\partial}{\partial\zeta^i}(\dot{\xi}^i(0)t)
  =\sum_i\frac{\partial\zeta^i}{\partial\zeta^i}=d,
\]
the assertion holds.
\end{proof}

\begin{proof}[Proof of Theorem~2.9]
  We work with the normal coordinate system at $\zeta$
  denoted by $\zeta=\dot{\xi}(0)t$. By the relation between
  $\alpha$-Laplacians (11), we have
  \[
    \Delta^{(-1)}f=\Delta^{(0)}f-\frac{1}{2}\sum_iS^i\partial_i f.
  \]
  In the normal coordinate system at $\zeta$, the partial differential
  equation (14) is written as
  \begin{equation}\label{theo:cond0:1}
    \sum_{i,j}h^{ij}\frac{\partial f}{\partial\zeta^i}\frac{\partial l^*}{\partial\zeta^j}+\frac{1}{2}\Delta^{(0)}f=0,
  \end{equation}
  where
  \[
    l^*(\zeta):=\tilde{l}(\zeta)-\frac{1}{4}\sum_i\int S_i\dot{\xi}^idt.
  \]
  If the penalty function $\tilde{l}(\zeta)$ is a function of
  the squared geodesic distance, we may write $l^*(\zeta)=\chi^*(t^2(\zeta))$.
  Then, \eqref{theo:cond0:1} becomes
  \begin{equation}\label{theo:cond0:2}
    4t^2\sum_{i,j}h^{ij}\frac{\partial t}{\partial\zeta^i}
    \frac{\partial t}{\partial\zeta^j}\varphi'(t^2){\chi^*}'(t^2)
    +\frac{1}{2}
    \{\varphi'(t^2)\Delta^{(0)}(t^2)+4t^2\varphi''(t^2)\}=0,
  \end{equation}
      where $\varphi'(x)=d\varphi/dx$,  
        $\varphi''(x)=d^2\varphi/dx^2$, and $\chi^{*'}(x)=d\chi^*/dx$ 
        for $x\in\mathbb{R}_{>0}$
        and we used
  Lemma~\ref{lemm:riesz} (i). By using \eqref{can} and \eqref{riesz52},
  \eqref{theo:cond0:2} is recast into the ordinary differential
  equation for $\varphi$:
  \begin{equation}\label{theo:cond0:3}
    {\chi^*}'(t^2)=-\frac{1}{2}
    \left\{\frac{\varphi''(t^2)}{\varphi'(t^2)}+\frac{\Delta^{(0)}(t^2)}{4t^2}\right\}.
  \end{equation}
  Noting that Lemma~\ref{lemm:riesz} (ii) yields
  \[
    \int\frac{\Delta^{(0)}(t^2)}{t^2}dt^2=2d\log t^2+
    \int\frac{1}{t}\frac{d}{dt}\{\log h\}dt^2=
    2d\log t^2+2\log h+{\rm const.},
  \]
  we obtain
  $\chi^*(t^2)=-\log\left\{|\varphi'(t^2)|t^d\sqrt{h}\right\}/2+{\rm const.}$
  by integrating \eqref{theo:cond0:3}. Hence, (21) follows.
\end{proof}

\subsection{Section~3: Flat manifolds}

The Riemann curvature tensor field $R$ is defined through
\[
R(X,Y)Z:=\nabla_X\nabla_YZ-\nabla_Y\nabla_XZ-\nabla_{[X,Y]}Z,
\]
where $\nabla_X=\sum_ix^i\nabla_i$ and
$[X,Y]:=\sum_{i,j}(x^i\partial_i y^j-y^i\partial_i x^j)\partial_j$
for vector fields $X=\sum_ix^i\partial_i$ and $Y=\sum_iy^i\partial_i$.  
The components are introduced by
$R(\partial_j,\partial_k)\partial_i=:\sum_rR^r_{ijk}\partial_r$,
where
\begin{equation}\label{curvature}
  R^r_{ijk}=\partial_j\Gamma^r_{ki}-\partial_k\Gamma^r_{ji}
  +\sum_s\Gamma^s_{ki}\Gamma^r_{js}-\sum_t\Gamma^t_{ji}\Gamma^r_{kt}
\end{equation}
(see Proposition III.7.6 of \cite{KN63}). The Riemann curvature
tensor with respect to the $\alpha$-connection, that is,
the expression \eqref{curvature} with Christoffel symbols (6)
is specifically denoted by ${R^{(\alpha)}}^r_{ijk}$.

As stated before Definition 3.1 in the text, an affine
connection on $M$ is flat if and only if a local coordinate
system exists around each point such that $\Gamma^i_{jk}=0$
for all $i$, $j$, and $k$ \citep{NS94}. An equivalent condition
is that the curvature tensor field vanishes identically on $M$
(see Theorem II.9.1 of \cite{KN63}).

\begin{proposition}
  If a manifold $M$ is one-dimensional, any affine connection
  on $M$ is flat because the component \eqref{curvature} is
  anti-symmetric in indices $j$ and $k$; namely,
  $R^1_{111}=-R^1_{111}=0$. 
\end{proposition}


Consider an affine connection $\nabla$ on $M$. If there exists
a \textit{parallel volume element,} that is, a non-zero
$d$-dimensional differential form $v$ satisfying
$\nabla v=0$ around each point of the manifold $M$, then
the affine connection $\nabla$ is said to be {\it locally equiaffine}
(see Section I.3 of \cite{NS94}). In other words, if an affine
connection $\nabla$ on $M$ is locally equiaffine, the system of
partial differential equations $\nabla v=0$ for $v$ is integrable
everywhere.

For the relationship between the $\alpha$-flatness of a manifold
$M$ and the locally equiaffineness of the affine connection on $M$,
we have a technical lemma.
We do not use this lemma directly in this paper, but many fact
appear in this paper relate to the following elementary proof.

\begin{lemma}\label{lemm:TA}
  Consider a $C^\infty$-manifold $M$.
  \begin{itemize}
  \item[$(i)$] The $0$-connection on $M$ is locally equiaffine.
  \item[$(ii)$] If $M$ is $\alpha_0$-flat for
    $\alpha_0\in\mathbb{R}\setminus\{0\}$, then
    the $\alpha$-connection on $M$ is locally equiaffine for
    all $\alpha\in\mathbb{R}$.
  \item[$(iii)$] The converse of $(ii)$ is not true.
  \end{itemize}  
\end{lemma}

  \begin{proof}
  With a local coordinate system $\{\xi^1,\ldots,\xi^d\}$,
  a $d$-dimensional differential form $v$ is written as
  $v(\xi)=v_{i_1\ldots i_d}(\xi)
  d\xi^{i_1}\wedge\cdots\wedge d\xi^{i_d}$,
  where the density $v_{i_1\ldots i_d}(\xi)$ is
  antisymmetric in indices, so
  $v_{i_1\ldots i_d}(\xi)\neq 0$ if and only if all
  indices are distinct (see Section I.1 of \cite{KN63} for
  differential forms). In the local
  coordinate system, the components of the condition
  $\nabla^{(\alpha)}v=0$ gives the following system of
  partial differential equations (see Section III.7 of \cite{KN63}):
  \begin{equation}\label{al_cond}
    \partial_i v_{1\ldots d}-\sum_{j,k} {\Gamma^{(\alpha)}}^k_{ij}
    v_{1\ldots j-1kj+1\ldots d}
    =\partial_i v_{1\ldots d}-\sum_j{\Gamma^{(\alpha)}}^{j}_{ji}
    v_{1\ldots d}=0\quad
    \text{for~all} \quad i\in\{1,\ldots,d\},
  \end{equation}
  where the first equality follows by the antisymmetric
  property of $v_{i_1\cdots i_d}$. For simplicity,
  let $v_{1\ldots d}$ be denoted by $v$. If $\alpha=0$,
  the differential from with $v=\sqrt{g}$ is called
  the volume element and satisfies the system \eqref{al_cond}
  identically, because
  \[
    \sum_j{\Gamma^{(0)}}^j_{ij}=\sum_{j,k}g^{jk}\Gamma^{(0)}_{ij,k}
    =\frac{1}{2}\sum_{jk}g^{jk}\partial_ig_{jk}
    =\frac{1}{2} \partial_i\log g=\partial_i\log\sqrt{g},
  \]
  where the second equality follows by the expression (8).
  Therefore 0-connection is locally equiaffine and
  assertion (i) holds. For assertion (ii), if $M$ is
  $\alpha_0$-flat for a non-zero $\alpha_0$, we can
  take a local coordinate system satisfying
  ${\Gamma^{(\alpha_0)}}^j_{ij}=0$ around each point of
  $M$.
  By using (6), we have
  \[
  \sum_j{\Gamma^{(\alpha)}}^j_{ij}
  =\sum_j{\Gamma^{(\alpha_0)}}^j_{ij}+
  \frac{\alpha_0-\alpha}{2}\sum_{j,k}g^{jk}S_{ijk}
  \]
  for all $\alpha\in\mathbb{R}$. On the other hand,
  the second equality of (7) becomes
  \begin{equation}\label{g'}
  \partial_i g_{jk}=\alpha_0 S_{ijk}.
  \end{equation}
  Therefore, we have
  \[
    \sum_j{\Gamma^{(\alpha)}}^j_{ji}
    =\frac{\alpha_0-\alpha}{2\alpha_0}
    \sum_{j,k}g^{jk}\partial_i g_{jk}
    =\frac{\alpha_0-\alpha}{2\alpha_0}\partial_i \log g
  \]
  Then, the system \eqref{al_cond} becomes
  \[
    \partial_iv=\frac{\alpha_0-\alpha}{2\alpha_0}
    \partial_i\log g \quad \text{for~all} \quad i\in\{1,\ldots,d\}.
  \]
  This system is integrable, because it satisfies
  the integrability conditions (23),
  i.e., $\partial_i\partial_jh=\partial_j\partial_iv$ for all
  $i\neq j$. We immediately obtain
  $\log v=(1/2-\alpha/(2\alpha_0))\log g+\text{const.}$
  For assertion (iii), since the integrability condition yields
  $\partial_j({\Gamma^{(\alpha)}}^i_{ik}v)=\partial_k({\Gamma^{(\alpha)}}^i_{ij}v)$, $\forall \alpha,j\neq k$, we have
  $\partial_j{\Gamma^{(\alpha)}}^i_{ik}=\partial_k{\Gamma^{(\alpha)}}^i_{ij}$, $\forall \alpha,j\neq k$.
  On the other hand, by the definition of $\alpha$-connections
  (5), we have
  \[
    \sum_j{\Gamma^{(\alpha)}}^j_{ji}=\sum_j{\Gamma^{(0)}}^j_{ji}
    -\frac{\alpha}{2}S_i
    =\frac{1}{2}\partial_i\log g-\frac{\alpha}{2}S_i.
  \]
  Therefore, we have
  \begin{equation}\label{lemm:TA:1}
    \partial_iS_j=\partial_jS_i \quad
    \text {for all} \quad i\neq j.
  \end{equation}
  But we can construct a counter example, namely,
  there exists a manifold satisfying \eqref{lemm:TA:1}
  but is not $\alpha$-flat for any $\alpha$
  (see Section~4.3).
  \end{proof}
  
\begin{remark}
  Equiaffiness is a fandermental concept in affine differential
  geometry. Proposition I.3.1 of \cite{NS94} says that
  a torsion-free affine connection on $M$ is locally equiaffine
  if and only if the Ricchi tensor is symmetric. Since
  the 0-connection has the latter property, assertion (i)
  follows immediately.
  Takeuchi and Amari \cite{TA05} obtained assertion (ii) by using
  Corollary 3.12
  of \cite{Lau87}, but the proof above seems more straightforward.
\end{remark}

  Interestingly, $\alpha$-parallel volume elements have
  already appeared in various contexts in the statistical
  literature.
  
\begin{remark}\label{rema:TA}
  Section 6 of \cite{Har64} defined
  the asymptotically locally invariant prior by $v$ which
  solves the partial differential equation \eqref{al_cond}
  for $\alpha=1$, and satisfies $v(f(\xi))f'(\xi)\propto v(\xi)$
  asymptotically and locally for any estimand function $f$,
  with assuming the existence. He proposed a one-parameter
  family of invariant priors, which is a solution of
  \eqref{al_cond}. An example of $\alpha$-parallel
  volume elements with respect to an $\alpha_0$-flat
  manifold is $v(\alpha_0;\alpha_0)=1$ for any non-zero
  $\alpha_0$, where the case of the exponential family
  ($\alpha_0=1$) was discussed in \cite{Har98}. Another
  example is $v(\alpha;\pm 1)$, which appeared in
  \cite{TA05}.
  Kosmidis and Firth \cite{KF09} obtained
  $v(\alpha_0/2-1/2;\alpha_0)$ for any non-zero
  $\alpha_0$ as the penalty function for Firth's bias
  reduction method for generalized linear models for
  generic link function (Example 4).
\end{remark}

\begin{proof}[Proof of Corollary 3.2]
  Using the relation between $(-1)$ and $0$-Laplacian (11)
  with the expression of the $0$-Laplacian (10), 
  the partial differential equation (14) becomes
  \[
  \langle{\rm grad}\tilde{l},{\rm grad}f\rangle=
  -\frac{1}{2}\Delta^{(-1)}f=
  -\frac{1}{2}\frac{1}{\sqrt{g}}\sum_i\partial_i(\sqrt{g}({\rm grad}f)^i)
  +\frac{1}{4}\sum_iS_i({\rm grad}f)^i.
  \]
  When $d=1$, this reduces to the ordinary differential equation:
  \[
  g^{-1}\tilde{l}'f'=-\frac{1}{2}\frac{1}{\sqrt{g}}
  (g^{-1/2}f')'+\frac{1}{4}g^{-1}S_1f'=
  -\frac{f''}{2g}+\frac{f'}{4g^2}+\frac{f'}{4g}S_1,
  \]
  where we used $g^{11}=g^{-1}$ and $g_{11}=g$, and we have
  \begin{equation}\label{1dim}
  \tilde{l}'=-\frac{1}{2}\frac{f''}{f'}+\frac{1}{4}\frac{g'}{g}
  +\frac{1}{4}S_1
  \end{equation}
  and the integration yields (24). For the second assertion,
  if $\xi$ is an $\alpha$-affine coordinate system,
  \eqref{g'} gives $(\log g)'=\alpha S_1$, and \eqref{1dim}
  becomes
  \[
  \tilde{l}'=-\frac{1}{2}\frac{f''}{f'}+\frac{1+\alpha}{4\alpha}
  \frac{g'}{g}.
  \]
  Noting the definition of the $(\alpha-1)/2$-parallel volume element
  with respect to an $\alpha$-flat manifold (25), the integration
  yields (26).
\end{proof}

\begin{proof}[Proof of Corollary 3.4]
  The integrability condition (23) for the system of partial differential
  equations (28) is
\[
  \frac{1+\alpha}{2}(\partial_iS_j-\partial_jS_i)=
  \partial_i\left\{\frac{\Delta^{(\alpha)}f}{({\rm grad}f)^j}\right\}
  -\partial_j\left\{\frac{\Delta^{(\alpha)}f}{({\rm grad}f)^i}\right\}
  \quad \text{for~all} \quad i\neq j.
\]
If $\alpha\neq 0$, $S_i=\partial_i(\log g)/\alpha$ follows by
\eqref{g'}. The condition reduces to a simpler form:
\[
  \partial_i\left\{\frac{\Delta^{(\alpha)}f}{({\rm grad}f)^j}\right\}
  =\partial_j\left\{\frac{\Delta^{(\alpha)}f}{({\rm grad}f)^i}\right\}
  \quad \text{for~all} \quad i\neq j.
\]
\color{black}
An obvious class of estimand functions satisfying this integrability
condition is the $\alpha$-harmonic functions defined in Section~2.1,
that is, $\Delta^{(\alpha)}f=0$. The system of partial differential
equations (28) becomes
\[
  \partial_i\tilde{l}=\frac{1+\alpha}{4\alpha}\partial_i\log g \quad
  \text{for~all} \quad i\in\{1,\ldots,d\}
\]
and can be immediately integrated. The solution coincides with
the $(\alpha-1)/2$-parallel volume element with respect to
an $\alpha$-flat manifold. 
\end{proof}

\section{Estimating equations for the squared geodesic
  distance in the hyperbolic space} \label{appe:est_hyp}

The system of estimating equations for
$\hat{\xi}=(\hat{\mu},\hat{\sigma})$ which maximizes the penalized
likelihood \eqref{pen_hyp} is
\begin{align*}
  \mu&=\bar{x}+\frac{\sigma^2}{2n}
  \left(\frac{2\nu}{\nu^2-1}\frac{\mu}{\sigma}
  -\frac{1}{4f}\frac{\partial f}{\partial\mu}
  -\frac{1}{4g}\frac{\partial g}{\partial\mu}\right),\\
  \sigma^2&=2(s^2+(\bar{x}-\mu)^2)
  +\frac{\sigma^3}{n}
  \left\{\left(\frac{c}{4R^2}+2\right)\frac{1}{\sigma}
  +\frac{2(\sigma-\nu)\nu}{(\nu^2-1)\sigma}
  -\frac{1}{4f}\frac{\partial f}{\partial\sigma}
  -\frac{1}{4g}\frac{\partial g}{\partial\sigma}\right\},
\end{align*}
where
\begin{align*}
  f=&\mu^4\rho^2
    +\{\mu^2\rho^2+\sigma^2(\nu^2-1)^2\}(\sigma\nu-1)^2,\\
  g=&(\sigma\nu-1)^4\rho^2
     +\{(\sigma\nu-1)^2\rho^2+\sigma^2(\nu^2-1)^2\}\mu^2,\\
  \frac{\partial f}{\partial\mu}=&
  \frac{2\mu^3\rho}{\sigma\sqrt{\nu^2-1}}\{\mu^2+(\sigma\nu-1)^2\}
   +2\mu\{\mu^2(\sigma\nu+1)+(\sigma\nu-1)^2\}\rho^2\\
   &+2\mu\sigma(\sigma\nu-1)\{\sigma(3\nu^2-1)-2\nu\}(\nu^2-1),\\
  \frac{\partial g}{\partial\mu}=&
  \frac{2\mu(\sigma\nu-1)^2\rho}{\sigma\sqrt{\nu^2-1}}
  \{\mu^2+(\sigma\nu-1)^2\}
  +2\mu\sigma(\nu^2-1)\{2\mu^2\nu+\sigma(\nu^2-1)\}\\
  &+2\mu(\sigma\nu-1)\{\mu^2+(2\sigma\nu-1)(\sigma\nu-1)\}\rho^2,\\
  \frac{\partial f}{\partial\sigma}=&
  \frac{2\mu^2(\sigma-\nu)\rho}{\sigma\sqrt{\nu^2-1}}
    \{\mu^2+(\sigma\nu-1)^2\}+2\mu^2\sigma(\sigma\nu-1)\rho^2\\
    &+2\sigma(\sigma\nu-1)
    \{(\sigma\nu-1)(2\sigma\nu-\nu^2-1)+\sigma^2(\nu^2-1)\}(\nu^2-1),\\
  \frac{\partial g}{\partial\sigma}=&
  \frac{2(\sigma\nu-1)^2(\sigma-\nu)\rho}{\sigma\sqrt{\nu^2-1}}
  \{\mu^2+(\sigma\nu-1)^2\}+2\sigma(\sigma\nu-1)
  \{\mu^2+2(\sigma\nu-1)^2\}\rho^2\\
    &+2\mu^2(2\sigma\nu-\nu^2-1)\sigma(\nu^2-1).  
\end{align*} 

\end{appendix}

\section*{Acknowledgements}

The authors thank Professors Shiro Ikeda, Tao Zou, and
an anonymous referee for drawing their attention to
preceding works.
The first author was supported in part by JSPS KAKENHI
Grant 18K12758. The second author was supported in part
by JSPS KAKENHI Grants 18H00835 and 20K03742.

\end{document}